\def\omathop#1#2#3{\let\temp=#1\def\letter{#2}
  \ifcat#3_ \let\next\@@olim\else\let\next\@olim\fi\next#3}
\def\@olim{\letter\text{-}\!\temp}
\def\@@olim_#1{\mathchoice{
   \setbox0=\hbox{$\displaystyle\letter\text{-}\!\temp\!\text{-}\letter$}
   \setbox2=\hbox{$\displaystyle\temp$}
   \setbox4=\hbox{$\scriptstyle#1$}
   \dimen@=\wd4 \advance\dimen@ by -\wd2 \divide\dimen@ by2
   \def\next{\letter\text{-}\!\temp_{\hbox to 0pt{\hss$\scriptstyle#1$\hss}}
     \hskip\dimen@}
   \ifdim\wd2>\wd4 \def\next{\@olim_{#1}}\fi
   \ifdim\wd4>\wd0 \def\next{\mathop{\llap{$\letter$-}\!\temp}\limits_{#1}}\fi
   \next}
   {\@olim_{#1}}{\@olim_{#1}}{\@olim_{#1}}}
\def\olim{\omathop{\lim}{o}}
\newcommand{\be}{\begin{equation}}
\newcommand{\ee}{\end{equation}}
\def\phi{\varphi}
\newcommand{\bi}{\begin{itemize}}
\newcommand{\ei}{\end{itemize}}
\newcommand{\bn}{\begin{enumerate}}
\newcommand{\en}{\end{enumerate}}
\newcommand{\hide}[1]{}          % Operator forgets, all what is written within the brackets %
\theoremstyle{plain}
\newtheorem{thm}{Theorem}[section]
\newtheorem{lemma}[thm]{Lemma}
\newtheorem{rem}[thm]{Remark}
\newtheorem{example}[thm]{Example}
\theoremstyle{definition}
\newtheorem{definition}[thm]{Definition}
\numberwithin{equation}{section}
\begin{document}

\title{Domination problem for  narrow  orthogonally additive operators}

\author{Marat~Pliev}

\address{South Mathematical Institute of the Russian Academy of Sciences\\
str. Markusa 22,
Vladikavkaz, 362027 Russia}

\email{maratpliev@gmail.com}

%\centerline{\today}

\keywords{Orthogonally additive operators, order narrow operators, fragments, vector lattices,  domination problem}

\subjclass[2010]{Primary 47H30; Secondary 47H99.}

\begin{abstract}
The ``Up-and-down'' theorem which describes the structure of the Boolean algebra of fragments of a linear positive  operator is the   well known result of the operator theory. We prove an analog of this  theorem for a positive abstract Uryson operator defined on a vector lattice and  taking values in a Dedekind complete vector lattice. This result we apply to prove that for an order narrow positive abstract Uryson operator $T$ from a vector lattice $E$ to a Dedekind complete vector lattice $F$, every abstract Uryson operator $S:E\to F$, such that   $0\leq S\leq T$ is  also order narrow.
\end{abstract}

\maketitle

%\tableofcontents

\section{Introduction}

Today the theory of narrow operators is an  active  area   of Functional Analysis (see the recent monograph \cite{PR}). Lately  the concept of the narrowness  was generalized to the setting of orthogonally additive  operators in vector lattices \cite{PP}.  The aim of this article  is  to continue  the  investigation of order  narrow orthogonally additive operators and to consider the domination problem for this class of operators.\footnote{ The research was supported by  Russian Foundation of Fundamental Research,the grant number 14-01-91339}

\section{Preliminaries}

The  goal of this section is to introduce some basic definitions and facts. General information on vector lattices and Boolean algebras  the reader can find in the books \cite{Al,Jec,Ku,LZ}.

Let $E$ be a vector lattice. A net $(x_\alpha)_{\alpha \in \Lambda}$ in $E$ \textit{order converges} to an element $x \in E$ (notation $x_\alpha \stackrel{\rm (o)}{\longrightarrow} x$) if there exists a net $(u_\alpha)_{\alpha \in \Lambda}$ in $E_{+}$ such that $u_\alpha \downarrow 0$ and $|x_\beta - x| \leq u_\beta$ for all $\beta\in \Lambda$. The equality $x=\bigsqcup\limits_{i=1}^nx_i$ means that $x=\sum\limits_{i=1}^nx_i$ and $x_i \bot x_j$ if $i\neq j$.
An element $y$ of  $E$ is called a \textit{fragment} (in another terminology, a \textit{component}) of an element $x \in E$, provided $y \bot (x-y)$. The notation $y \sqsubseteq x$ means that $y$ is a fragment of $x$. Two fragments $x_{1},x_{2}$ of $x$  are called \textit{mutually complemented} or $MC$, in short, if $x = x_1 \sqcup x_2$.  If $E$ is a vector lattice and $e \in E$ then by $\mathcal{F}_e$ we denote the set of all fragments of $e$.

An element $e$ of a vector lattice $E$ is called a \textit{projection element} if the band  generated by $e$ is a projection band. A vector lattice $E$ is said to have the \textit{principal projection property} if every element of $E$ is a projection element. For instance, every Dedekind $\sigma$-complete vector lattice has the principal projection property.

\begin{definition} \label{def:ddmjf0}
Let $E$ be a vector lattice, and let $F$ be a real linear space. An operator $T:E\rightarrow F$ is called \textit{orthogonally additive} if $T(x+y)=T(x)+T(y)$ whenever $x,y\in E$ are disjoint.
\end{definition}

It follows from the definition that $T(0)=0$. It is immediate that the set of all orthogonally additive operators is a real vector space with respect to the natural linear operations.

\begin{definition}
Let $E$ and $F$ be vector lattices. An orthogonally additive operator $T:E\rightarrow F$ is called:
\begin{itemize}
  \item \textit{positive} if $Tx \geq 0$ holds in $F$ for all $x \in E$;
  \item \textit{order bounded} if $T$ maps order bounded sets in $E$ to order bounded sets in $F$.
\end{itemize}
An orthogonally additive, order bounded operator $T:E\rightarrow F$ is called an \textit{abstract Uryson} operator.
This class of operators  was introduced and studied in 1990 by Maz\'{o}n and Segura de Le\'{o}n \cite{Maz-1,Maz-2}, and then extended to lattice-normed spaces by Kusraev and the second named author \cite{Ku-1,Ku-2,Pl-3}. Currently orthogonally additive operators are an active area of investigations \cite{Ben,Get,Gum,PP,PP-1}.
\end{definition}
For example, any linear  operator $T\in L_{+}(E,F)$ defines a positive abstract Uryson operator by $G (f) = T |f|$ for each $f \in E$.
Observe that if $T: E \to F$ is a positive orthogonally additive operator and $x \in E$ is such that $T(x) \neq 0$ then $T(-x) \neq - T(x)$, because otherwise both $T(x) \geq 0$ and $T(-x) \geq 0$ imply $T(x) = 0$. So, the above notion of positivity is far from the usual positivity of a linear operator: the only linear operator which is positive in the above sense is zero. A positive orthogonally additive operator need not be order bounded. Consider, for example, the real function $T: \mathbb R \to \mathbb R$ defined by
$$
T(x)=\begin{cases} \frac{1}{x^{2}}\,\,\,\text{if $x\neq
0$}\\
0\,\,\,\,\,\text{if $x=0$ }.\\
\end{cases}
$$

The set of all abstract Uryson operators from $E$ to $F$ we denote by $\mathcal{U}(E,F)$.
Consider some examples.
The most famous one is the nonlinear integral Uryson operator.

\begin{example}
Let $(A,\Sigma,\mu)$ and $(B,\Xi,\nu)$ be $\sigma$-finite complete measure spaces, and let $(A\times B,\mu\times\nu)$ denote the completion of their product measure space. Let $K:A\times B\times\Bbb{R}\rightarrow\Bbb{R}$ be a function satisfying the following conditions\footnote{$(C_{1})$ and $(C_{2})$ are called the Carath\'{e}odory conditions}:
\begin{enumerate}
  \item[$(C_{0})$] $K(s,t,0)=0$ for $\mu\times\nu$-almost all $(s,t)\in A\times B$;
  \item[$(C_{1})$] $K(\cdot,\cdot,r)$ is $\mu\times\nu$-measurable for all $r\in\Bbb{R}$;
  \item[$(C_{2})$] $K(s,t,\cdot)$ is continuous on $\Bbb{R}$ for $\mu\times\nu$-almost all $(s,t)\in A\times B$.
\end{enumerate}
Given $f\in L_{0}(B,\Xi,\nu)$, the function $|K(s,\cdot,f(\cdot))|$ is $\nu$-measurable  for $\mu$-almost all $s\in A$ and $h_{f}(s):=\int_{B}|K(s,t,f(t))|\,d\nu(t)$ is a well defined and $\mu$-measurable function. Since the function $h_{f}$ can be infinite on a set of positive measure, we define
$$
\text{Dom}_{B}(K):=\{f\in L_{0}(\nu):\,h_{f}\in L_{0}(\mu)\}.
$$
Then we define an operator $T:\text{Dom}_{B}(K)\rightarrow L_{0}(\mu)$ by setting
$$
(Tf)(s):=\int_{B}K(s,t,f(t))\,d\nu(t)\,\,\,\,\mu-\text{a.e.}\,\,\,\,(\star)
$$
Let $E$ and $F$ be order ideals in $L_{0}(\nu)$ and $L_{0}(\mu)$ respectively, $K$ a function satisfying $(C_{0})$-$(C_{2})$. Then $(\star)$ defines an \textit{orthogonally additive order bounded integral operator} acting from $E$ to $F$ if $E\subseteq \text{Dom}_{B}(K)$ and $T(E)\subseteq F$.
\end{example}

\begin{example} \label{Ex-1}
We consider the vector space $\mathbb R^m$, $m \in \mathbb N$ as a vector lattice with the coordinate-wise order: for any $x,y \in \mathbb R^m$ we set $x \leq y$ provided $e_i^*(x) \leq e_i^*(y)$ for all $i = 1, \ldots, m$, where $(e_i^*)_{i=1}^m$ are the coordinate functionals on $\mathbb R^m$. Let $T:\Bbb{R}^{n}\rightarrow\Bbb{R}^{m}$. Then $T\in\mathcal{U}(\Bbb{R}^{n},\Bbb{R}^{m})$ if and only if there are real functions $T_{i,j}:\Bbb{R}\rightarrow\Bbb{R}$,
$1\leq i\leq m$, $1\leq j\leq n$ satisfying $T_{i,j}(0)=0$ such that
$$
e_i^*\bigl(T(x_{1},\dots,x_{n})\bigr) = \sum_{j=1}^{n}T_{i,j}(x_{j}),
$$
In this case we write $T=(T_{i,j})$.
\end{example}

\begin{example} \label{Ex2}
Let $T:l^{2}\rightarrow\Bbb{R}$ be the operator defined by
$$
T(x_{1},\dots,x_{n},\dots) = \sum_{n\in I_{x}}n\bigl(|x_{n}|-1\bigr)
$$
where $I_{x}:=\{n\in\Bbb{N}:\,|x_{n}|\geq 1\}$. It is not difficult to check that $T$ is a positive abstract Uryson operator.
\end{example}

\begin{example} \label{Ex3}
Let $(\Omega, \Sigma, \mu)$ be a measure space, $E$ a sublattice of the vector lattice $L_0(\mu)$ of all equivalence classes of $\Sigma$-measurable functions $x: \Omega \to \mathbb R$, $F$ a vector lattice and $\nu: \Sigma \to F$ a finitely additive measure. Then the map $T: E \to F$ given by $T(x) = \nu({\rm supp} \, x)$ for any $x \in E$, is an abstract Uryson operator which is positive if and only if $\nu$ is positive.
\end{example}

Consider the following order in $\mathcal{U}(E,F):S\leq T$ whenever $T-S$ is a positive operator. Then $\mathcal{U}(E,F)$
becomes an ordered vector space.  If a vector lattice $F$ is Dedekind complete we have the following theorem.
\begin{thm}(\cite{Maz-1},Theorem~3.2)\label{th-1}.
Let $E$ and $F$ be a vector lattices, $F$ Dedekind complete. Then $\mathcal{U}(E,F)$ is a Dedekind complete vector lattice. Moreover for $S,T\in \mathcal{U}(E,F)$ and for $f\in E$ following hold
\begin{enumerate}
\item~$(T\vee S)(f):=\sup\{Tg_{1}+Sg_{2}:\,f=g_{1}\sqcup g_{2}\}$.
\item~$(T\wedge S)(f):=\inf\{Tg_{1}+Sg_{2}:\,f=g_{1}\sqcup g_{2}\}.$
\item~$(T)^{+}(f):=\sup\{Tg:\,g\sqsubseteq f\}$.
\item~$(T)^{-}(f):=-\inf\{Tg:\,g;\,\,g\sqsubseteq f\}$.
\item~$|Tf|\leq|T|(f)$.
\end{enumerate}
\end{thm}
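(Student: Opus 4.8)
The plan is to adapt the Riesz--Kantorovich construction to the orthogonally additive setting, with the lateral (disjoint) decomposition $f = g_1 \sqcup g_2$ playing the role that the order decomposition $x = x_1 + x_2$, $x_i \geq 0$, plays for regular linear operators. Since an ordered vector space is a vector lattice as soon as $T \vee 0$ exists for every element, the heart of the matter is to produce, for each $T \in \mathcal{U}(E,F)$, the positive part. I would define $R(f) := \sup\{Tg : g \sqsubseteq f\}$ for every $f \in E$ and first check that this supremum exists in $F$: the fragments of $f$ satisfy $|g| \leq |f|$, so they form an order bounded subset of $E$; order boundedness of $T$ then makes $\{Tg : g \sqsubseteq f\}$ order bounded in $F$, and Dedekind completeness of $F$ supplies the supremum. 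Taking $g = 0$ and $g = f$ shows at once that $R \geq 0$ and $R \geq T$ pointwise, hence in the order of $\mathcal{U}(E,F)$.

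The key step is to show that $R$ is itself an abstract Uryson operator, i.e.\ orthogonally additive (order boundedness of $R$ on order bounded sets follows from that of $T$ by the same estimate $|g| \le |f|$ together with $0 \sqsubseteq f$). Fix disjoint $u, v$ and put $f = u \sqcup v$. The inequality $R(f) \geq R(u) + R(v)$ is elementary: if $g_1 \sqsubseteq u$ and $g_2 \sqsubseteq v$ then $g_1 \perp g_2$, since they lie in the disjoint bands generated by $u$ and $v$, and $g_1 \sqcup g_2 \sqsubseteq f$, so $Tg_1 + Tg_2 = T(g_1 \sqcup g_2) \leq R(f)$; one then takes the supremum over $g_1, g_2$ independently. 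The reverse inequality is where the real work lies: given an arbitrary fragment $g \sqsubseteq f$, one must split it as $g = g_1 \sqcup g_2$ with $g_1 \sqsubseteq u$ and $g_2 \sqsubseteq v$, whence $Tg = Tg_1 + Tg_2 \leq R(u) + R(v)$. This splitting is precisely the statement that the Boolean algebra $\mathcal{F}_f$ of fragments of $f$ decomposes as the product $\mathcal{F}_u \times \mathcal{F}_v$ when $f = u \sqcup v$; establishing this decomposition in a general, not necessarily projection-complete, vector lattice is the main obstacle, and I expect to isolate it as a separate lemma on the Boolean algebra of fragments. Granting it, $R$ is orthogonally additive, so $R \in \mathcal{U}(E,F)$, and the proof that $R$ is the least element above $T$ and $0$ is immediate: if $S \ge T$ and $S \ge 0$ then for each $g \sqsubseteq f$ one has $S(f) = Sg + S(f-g) \ge Sg \ge Tg$, so $S(f) \ge R(f)$. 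Thus $R = T^+$, which is formula $(3)$; formula $(4)$ is the same argument applied to $-T$.

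With the positive part in hand, the remaining formulas are purely algebraic. From the Riesz identity $T \vee S = S + (T - S)^+$ and the orthogonal additivity $Sf - Sg = S(f - g)$, valid for $g \sqsubseteq f$, one gets
$$
(T \vee S)(f) = Sf + \sup\{(T-S)g : g \sqsubseteq f\} = \sup\{Tg + S(f - g) : g \sqsubseteq f\},
$$
and reindexing by the complementary pair $g_1 = g$, $g_2 = f - g$ yields formula $(1)$; applying this to $-T, -S$ and using $T \wedge S = -\bigl((-T)\vee(-S)\bigr)$ gives formula $(2)$. Finally $|T| = T^+ + T^-$, and since $f = f \sqcup 0 = 0 \sqcup f$ are admissible decompositions, the join formula gives $|T|(f) \ge Tf$ and $|T|(f) \ge -Tf$, i.e.\ $|T|(f) \ge |Tf|$, which is $(5)$.

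It remains to prove Dedekind completeness, for which it suffices to show that every upward directed, order bounded family $(T_\alpha) \subseteq \mathcal{U}(E,F)$ has a supremum. I would set $T(f) := \sup_\alpha T_\alpha(f)$, the pointwise supremum, which exists by Dedekind completeness of $F$ and the order bound. Orthogonal additivity is preserved precisely because the family is directed: for disjoint $u, v$,
$$
T(u \sqcup v) = \sup_\alpha \bigl(T_\alpha u + T_\alpha v\bigr) = \sup_\alpha T_\alpha u + \sup_\beta T_\beta v = Tu + Tv,
$$
where directedness justifies splitting the supremum of the sum. Order boundedness of $T$ is inherited from the bound on the family, so $T \in \mathcal{U}(E,F)$, and it is routine that $T$ is the supremum of $(T_\alpha)$ in the pointwise, equivalently operator, order. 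This establishes Dedekind completeness and completes the proof.
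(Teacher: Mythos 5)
Note first that the paper does not prove this statement at all: it is quoted verbatim from Maz\'on and Segura de Le\'on (\cite{Maz-1}, Theorem~3.2), so there is no internal proof to compare against. Your proposal reconstructs what is in fact the standard argument of the cited source: the Riesz--Kantorovich scheme with lateral decompositions $f=g_1\sqcup g_2$ replacing order decompositions, the positive part defined by $T^+(f)=\sup\{Tg:\,g\sqsubseteq f\}$, the remaining formulas derived algebraically from $T\vee S=S+(T-S)^+$, and Dedekind completeness via pointwise suprema of upward directed families. All of these steps check out, including the directedness argument that lets you split $\sup_\alpha(T_\alpha u+T_\alpha v)$. The one step you defer --- that every fragment $g\sqsubseteq u\sqcup v$ splits as $g=g_1\sqcup g_2$ with $g_1\sqsubseteq u$, $g_2\sqsubseteq v$, in an arbitrary vector lattice without any projection property --- is correctly identified as the crux and is a true, standard lemma (for positive elements it follows from the identity $x\wedge(e_1+e_2)=x\wedge e_1+x\wedge e_2$ for $e_1\perp e_2$ together with $e_1-x\wedge e_1\leq e-x$; the general case reduces to this). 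So the proposal is sound and essentially coincides with the proof in the reference the paper relies on.
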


We follow \cite{PP} in the next definition.

\begin{definition} \label{def:nar1}
Let $E,F$ be   vector lattices with  $E$ an atomless. An abstract Uryson operator  $T: E \to F$ is called
   \textit{order narrow} if  for every $e \in E$ there exists a net of decompositions $e = f_\alpha \sqcup g_\alpha$ such that $(T(f_\alpha) - T(g_\alpha) )\overset{\rm (o)}\longrightarrow 0$.
\end{definition}
It is a worth noting that linear order narrow operators were firstly  introduced by Maslyuchenko, Mykhaylyuk and  Popov in  \cite{MMP}. Lately, in setting of  lattice-normed spaces linear order narrow operators were investigated by the  author in \cite{Pl}.

{\bf Acknowledgment.}~Author is very grateful to  Mikhail Popov  for the valuable remarks and useful discussions.

\section{The Boolean algebra of  fragments of a positive Uryson operator}

Let $E,F$ be vector lattices with $F$ Dedekind complete and $T\in\mathcal{U}_{+}(E,F)$.
The purpose of this section is to describe the fragments of $T$. That is
$$
\mathcal{F}_{T}=\{S\in\mathcal{U}_{+}(E,F):\,S\wedge(T-S)=0\}.
$$
Like in the linear case we consider elementary fragments.
For a subset $\mathcal{A}$ of a vector lattice $W$ we employ  the
following notation:
\begin{gather}
\mathcal{A}^{\upharpoonleft}=\{x\in W:\exists\,\,\text{a sequence}\,\,(x_{n})\subset\mathcal{A}\,\,\text{with}\,\,x_{n}\uparrow x\};\notag \\
\mathcal{A}^{\uparrow}=\{x\in W:\exists\,\,\text{a net}\,\,(x_{\alpha})\subset\mathcal{A}\,\,\text{with}\,\,x_{\alpha}\uparrow x\}.\notag
\end{gather}
The meanings of $\mathcal{A}^{\downharpoonleft}$ and $\mathcal{A}^{\downarrow}$ are analogous. As usual, we also write
$$
\mathcal{A}^{\downarrow\uparrow}=(\mathcal{A}^{\downarrow})^{\uparrow};
\mathcal{A}^{\upharpoonleft\downarrow\uparrow}=((\mathcal{A}^{\upharpoonleft})^{\downarrow})^{\uparrow}.
$$
It is clear that $\mathcal{A}^{\downarrow\downarrow}=\mathcal{A}^{\downarrow}$, $\mathcal{A}^{\uparrow\uparrow}=\mathcal{A}^{\uparrow}$.
Consider a positive abstract Uryson operator $T:E\rightarrow F$, where $F$ is Dedekind complete.
Since $\mathcal{F}_{T}$ is a Boolean algebra, it is closed under finite suprema and
infima. In particular, all ``ups and downs'' of $\mathcal{F}_{T}$  are likewise closed under finite suprema and infima, and therefore they
are also directed upward and, respectively, downward.

\begin{definition}\label{def:adm}
A subset $D$ of a vector lattice $E$ is called a \it{lateral ideal} if the following conditions hold
\begin{enumerate}
\item~if $x\in D$ then $y\in D$ for every $y\in\mathcal{F}_{x}$;
\item~if $x,y\in D$, $x\bot y$ then $x+y\in D$.
\end{enumerate}
\end{definition}

Consider some examples.

\begin{example}\label{adm-1}
Let $E$ be a vector lattice. Every order ideal in $E$ is a lateral ideal.
\end{example}

\begin{example}\label{adm-11}
Let $E,F$ be a vector lattices and $T\in\mathcal{U}_{+}(E,F)$. Then $\mathcal{N}_{T}:=\{e\in E:\,T(e)=0\}$  is a lateral ideal.
\end{example}

The following example is important for further considerations.

\begin{lemma}(\cite{Ben}, Lemma~3.5). \label{Ex1}
Let $E$ be a vector lattice and $x\in E$. Then $\mathcal{F}_{x}$ is a lateral ideal.
\end{lemma}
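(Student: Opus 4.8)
The plan is to verify the two defining conditions of a lateral ideal directly for $\mathcal{F}_x = \{y \in E : y \perp (x-y)\}$. Everything reduces to one elementary fact: for each $u \in E$ the disjoint complement $\{u\}^{\mathrm d} = \{v \in E : |v| \wedge |u| = 0\}$ is a band, and in particular a linear subspace, so it is closed under the formation of sums and differences. I will also use that a fragment $z$ of $y$ satisfies $|z| \le |y|$; indeed $z \perp (y-z)$ gives $|y| = |z| + |y-z| \ge |z|$.

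For condition (1) I prove transitivity of the fragment relation: if $z \sqsubseteq y$ and $y \sqsubseteq x$, then $z \sqsubseteq x$. I must show $z \perp (x-z)$, and to this end I write $x - z = (x-y) + (y-z)$. On one hand $z \perp (y-z)$ by $z \sqsubseteq y$; on the other hand $y \perp (x-y)$ together with $|z| \le |y|$ yields $|z| \wedge |x-y| \le |y| \wedge |x-y| = 0$, so $z \perp (x-y)$. Since $x-y$ and $y-z$ both lie in the band $\{z\}^{\mathrm d}$, so does their sum $x-z$, i.e. $z \perp (x-z)$. Hence $\mathcal{F}_x$ is closed under passing to fragments.

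For condition (2), let $y, z \in \mathcal{F}_x$ with $y \perp z$; I must show $(y+z) \sqsubseteq x$, that is $(x-y-z) \perp (y+z)$. I check that the element $w := x - y - z$ lies in $\{y\}^{\mathrm d} \cap \{z\}^{\mathrm d}$: writing $w = (x-y) - z$, both $x-y$ (by $y \sqsubseteq x$) and $z$ (by hypothesis) belong to the band $\{y\}^{\mathrm d}$, hence so does $w$; symmetrically, writing $w = (x-z) - y$ and using $z \sqsubseteq x$ together with $y \perp z$ gives $w \in \{z\}^{\mathrm d}$. Thus $y, z \in \{w\}^{\mathrm d}$, and since $\{w\}^{\mathrm d}$ is a band it contains $y + z$, which is exactly the assertion $w \perp (y+z)$. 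Therefore $y + z \in \mathcal{F}_x$.

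Once the band property of disjoint complements is recorded, the proof is routine bookkeeping with the identities $x-z = (x-y)+(y-z)$ and $x-y-z = (x-y)-z = (x-z)-y$. The only point requiring a moment's thought is the monotonicity $|z| \le |y|$ used in condition (1), which is what allows the disjointness of $y$ with $x-y$ to descend to the fragment $z$; I do not anticipate any genuine obstacle beyond this.
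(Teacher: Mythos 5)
Your proof is correct. Note that the paper itself gives no argument for this lemma --- it is quoted from \cite{Ben} (Lemma~3.5) --- so there is no in-paper proof to compare against; your direct verification of the two defining conditions of a lateral ideal is the natural one. Both steps are sound: the monotonicity $|z|\le|y|$ for $z\sqsubseteq y$ (via $|y|=|z|+|y-z|$ for disjoint summands) correctly lets the disjointness $y\perp(x-y)$ descend to $z$, and the repeated appeal to the fact that a disjoint complement $\{u\}^{\mathrm d}$ is a band, hence closed under sums and differences, carries both the transitivity of $\sqsubseteq$ and the closure of $\mathcal{F}_x$ under disjoint sums.
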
\label{le:02}

Let $T\in\mathcal{U}_{+}(E,F)$ and $D\subset E$ be a lateral ideal. Then for every $x\in E$, we  define a map  $\pi^{D}T:E\rightarrow F_{+}$ by the following formula
\begin{gather}
\pi^{D}T(x)=\sup\{Ty:\,y\in\mathcal{F}_{x}\cap D\}.
\end{gather}

\begin{lemma}(\cite{Ben},Lemma~3.6). \label{le:01}
Let $E,F$ be vector lattices  with $F$ Dedekind complete, $\rho\in\mathfrak{B}(F)$, $T\in\mathcal{U}_{+}(E,F)$ and $D$ be a lateral ideal.
Then $\pi^{D}T$ is a positive abstract Uryson operator and $\rho\pi^{D}T\in\mathcal{F}_{T}$.
\end{lemma}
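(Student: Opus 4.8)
The plan is to verify, in turn, that $\pi^{D}T$ is a well-defined positive abstract Uryson operator, that it is itself a fragment of $T$, and finally that applying the band projection $\rho$ keeps us inside $\mathcal{F}_{T}$. First I would check well-definedness and membership in $\mathcal{U}_{+}(E,F)$. For any $x$ every $y\in\mathcal{F}_{x}$ satisfies $|y|\leq|x|$, so $\mathcal{F}_{x}\cap D\subseteq[-|x|,|x|]$; since $T$ is order bounded and $F$ is Dedekind complete, the supremum defining $\pi^{D}T(x)$ exists. Positivity is immediate, because any nonempty lateral ideal contains $0$ by condition~(1), so $0\in\mathcal{F}_{x}\cap D$ and $\pi^{D}T(x)\geq T(0)=0$. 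Order boundedness follows similarly: if $S\subseteq[-u,u]$ and $b$ is an upper bound of the order-bounded set $T([-u,u])$, then for $x\in S$ every competitor $y$ has $|y|\leq|x|\leq u$, whence $Ty\leq b$ and $0\leq\pi^{D}T(x)\leq b$.

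The only substantial point at this stage is orthogonal additivity. For $x\perp y$ I would use the decomposition of the fragments of a disjoint sum, namely that $\mathcal{F}_{x+y}=\{u+v:u\in\mathcal{F}_{x},\,v\in\mathcal{F}_{y}\}$ with the two summands automatically disjoint; this follows by applying the Riesz decomposition property to $|z|\leq|x|+|y|$ and checking that the resulting components of $z$ in the disjoint bands generated by $x$ and $y$ are fragments of $x$ and of $y$. Given $z\in\mathcal{F}_{x+y}\cap D$, write $z=z_{1}+z_{2}$; both $z_{i}$ are fragments of $z$, hence lie in $D$ by condition~(1), so $z_{1}\in\mathcal{F}_{x}\cap D$, $z_{2}\in\mathcal{F}_{y}\cap D$ and $Tz=Tz_{1}+Tz_{2}\leq\pi^{D}T(x)+\pi^{D}T(y)$. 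Conversely any disjoint pair $z_{1}\in\mathcal{F}_{x}\cap D$, $z_{2}\in\mathcal{F}_{y}\cap D$ gives $z_{1}+z_{2}\in\mathcal{F}_{x+y}\cap D$ by condition~(2), which yields the reverse inequality after taking suprema. Hence $\pi^{D}T\in\mathcal{U}_{+}(E,F)$.

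Next I would show $\pi^{D}T\in\mathcal{F}_{T}$, i.e. $\pi^{D}T\wedge(T-\pi^{D}T)=0$. Two observations come first: for any fragment $y\sqsubseteq x$, positivity of $T$ gives $Tx=Ty+T(x-y)\geq Ty$, so $\pi^{D}T(x)\leq Tx$ and in particular $0\leq\pi^{D}T\leq T$; and if moreover $y\in D$ then $y\in\mathcal{F}_{y}\cap D$ forces $\pi^{D}T(y)=Ty$. Moreover $\mathcal{F}_{f}\cap D$ is upward directed (it is closed under the Boolean sum of $\mathcal{F}_{f}$ by condition~(2)), so $Ty\uparrow\pi^{D}T(f)$ along it. Now, using the infimum formula of Theorem~\ref{th-1}(2) with the decomposition $f=(f-y)\sqcup y$ for $y\in\mathcal{F}_{f}\cap D$, the corresponding term equals $\pi^{D}T(f-y)+Ty-\pi^{D}T(y)=\pi^{D}T(f-y)$. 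For $z\in\mathcal{F}_{f-y}\cap D$ one has $z\perp y$ and $z+y\in\mathcal{F}_{f}\cap D$, whence $Tz\leq\pi^{D}T(f)-Ty$ and so $0\leq\pi^{D}T(f-y)\leq\pi^{D}T(f)-Ty\downarrow 0$. Thus the infimum over these decompositions is $0$, giving $\pi^{D}T\wedge(T-\pi^{D}T)=0$. Finally, writing $\rho^{\mathrm c}:=I_{F}-\rho$ for the complementary band projection, I would note that $\rho\pi^{D}T+\rho^{\mathrm c}\pi^{D}T=\pi^{D}T$ and prove $\rho\pi^{D}T\wedge\rho^{\mathrm c}\pi^{D}T=0$: evaluating this meet at $f$ via Theorem~\ref{th-1}(2) and using the two decompositions $f=f\sqcup 0$ and $f=0\sqcup f$ bounds it above by $\rho\pi^{D}T(f)\wedge\rho^{\mathrm c}\pi^{D}T(f)=0$, since $\rho(\pi^{D}T f)$ and $\rho^{\mathrm c}(\pi^{D}T f)$ lie in complementary bands of $F$. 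Hence $\rho\pi^{D}T$ is a fragment of $\pi^{D}T$, and as the fragment relation is transitive (the $\mathcal{F}$'s are Boolean algebras) and $\pi^{D}T\in\mathcal{F}_{T}$, we conclude $\rho\pi^{D}T\in\mathcal{F}_{T}$.

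The main obstacle is the orthogonal additivity step, and within it the disjoint-sum decomposition of fragments: the inclusion $\mathcal{F}_{x+y}\subseteq\mathcal{F}_{x}+\mathcal{F}_{y}$ must be produced with no projection property available on $E$, which is precisely where the Riesz decomposition property and the disjointness of the bands generated by $x$ and $y$ enter. Everything else reduces to the lattice-operation formulas of Theorem~\ref{th-1} together with the two defining closure properties of a lateral ideal.
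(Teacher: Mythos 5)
The paper gives no proof of this lemma at all --- it is imported verbatim from \cite{Ben}, Lemma~3.6 --- so there is no in-paper argument to compare against; judged on its own, your proof is correct and follows the expected route. The two genuinely delicate points, namely the decomposition $\mathcal{F}_{x+y}=\{u+v:\,u\in\mathcal{F}_{x},\,v\in\mathcal{F}_{y}\}$ for disjoint $x,y$ (obtained via the Riesz decomposition property, with no projection property assumed on $E$) and the verification of $\pi^{D}T\wedge(T-\pi^{D}T)=0$ through Theorem~\ref{th-1}(2) applied to the decompositions $f=(f-y)\sqcup y$ with $y\in\mathcal{F}_{f}\cap D$, are both handled properly, as is the reduction of $\rho\pi^{D}T\in\mathcal{F}_{T}$ to transitivity of the fragment relation. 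One small remark: the upward-directedness of $\mathcal{F}_{f}\cap D$ that you invoke is not actually needed, since the conclusion only uses $\inf_{y}\bigl(\pi^{D}T(f)-Ty\bigr)=\pi^{D}T(f)-\sup_{y}Ty=0$, which holds for an arbitrary index set.
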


If $D=\mathcal{F}_{x}$ then  the
operator $\pi^{D}T$ is denoted by $\pi^{x}T$. Let $F$ be a vector lattice.
Recall that a family of mutually disjoint order projections  $(\rho_{\xi})_{\xi\in\Xi}$ on $F$ is said to be {\it partition of unity} if $\bigvee\limits_{\xi\in\Xi}(\rho_{\xi})_{\xi\in\Xi}=Id_{F}$.
Any fragment of the form $\sum_{i=1}^{n}\limits\rho_{i}\pi^{x_{i}}T$, $n\in\Bbb{N}$, where $\rho_{1},\dots,\rho_{n}$ is a  finite family of mutually disjoint order projections in $F$, like in  the linear case is
called an {\it elementary} fragment  $T$. The set of all elementary fragments of $T$ we denote by $\mathcal{A}_{T}$.

For  further considerations  we need the following auxiliary proposition, which was proven by  nonstandard methods.
\begin{lemma}[\cite{Ku-Kut}, Proposition~5.2.7.2]\label{op-100}
Let $F$ be a  Dedekind complete vector lattice with a weak order unit\footnote{\,An element $u\in F_+$ is a
{\it weak order unit} if $\{u\}^{\bot\bot}=F$, i.e. except $0$ there
are no elements in $F$ which are disjoint to $u$.} $u$ and $(x_{\lambda})_{\lambda\in\Lambda}$
be an order bounded net in $F$. Then the net  $(x_{\lambda})_{\lambda\in\Lambda}$ order converges to an element $x\in F$
if and only if for every $\varepsilon>0$ there exists a partition of unity
$(\rho_{\lambda})_{\lambda\in\Lambda}$ such that
\[
\rho_{\lambda}|x_{\beta}-x|\leq\varepsilon u, \;\; \beta\geq\lambda.
\]
\end{lemma}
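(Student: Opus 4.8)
The plan is to prove both implications directly, working in the complete Boolean algebra $\mathfrak{B}(F)$ of band projections of $F$ (available because $F$ is Dedekind complete); the weak order unit $u$ is what converts band information into the scalar‑type estimates $\leq\varepsilon u$. Throughout I write $Id_F$ for the identity projection.

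For the necessity part, suppose $x_\lambda\stackrel{\rm (o)}{\longrightarrow}x$, so by Definition the net is dominated by some $(w_\lambda)$ with $w_\lambda\downarrow 0$ and $|x_\beta-x|\leq w_\beta$ for all $\beta$. Fix $\varepsilon>0$ and let $\pi_\lambda$ be the band projection onto $\{(w_\lambda-\varepsilon u)^+\}^{\perp\perp}$, with complement $q_\lambda=Id_F-\pi_\lambda$. The first key computation is that $q_\lambda$ annihilates $(w_\lambda-\varepsilon u)^+$, so $q_\lambda(w_\lambda-\varepsilon u)=-(w_\lambda-\varepsilon u)^-\leq 0$, hence $q_\lambda w_\lambda\leq\varepsilon u$; combined with $|x_\beta-x|\leq w_\beta\leq w_\lambda$ for $\beta\geq\lambda$ this gives $q_\lambda|x_\beta-x|\leq\varepsilon u$ whenever $\beta\geq\lambda$. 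The second key point is that $\sup_\lambda q_\lambda=Id_F$: dually, $\pi_\lambda(w_\lambda-\varepsilon u)=(w_\lambda-\varepsilon u)^+\geq 0$ yields $\varepsilon\,\pi_\lambda u\leq w_\lambda$, so the projection $\pi:=\inf_\lambda\pi_\lambda$ satisfies $\varepsilon\,\pi u\leq\inf_\lambda w_\lambda=0$; thus $\pi u=0$, and since $u$ is a weak order unit this forces $\pi=0$.

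It remains to pass from the increasing net $(q_\lambda)$ with supremum $Id_F$ to an actual partition of unity indexed by $\Lambda$. I would do this by Zorn's lemma: among all families $(\sigma_\lambda)_{\lambda\in\Lambda}$ of pairwise disjoint projections with $\sigma_\lambda\leq q_\lambda$, ordered coordinatewise, take a maximal one $(\rho_\lambda)$. If $p:=Id_F-\sup_\lambda\rho_\lambda$ were nonzero, then $\sup_\lambda q_\lambda=Id_F$ would force $q_{\lambda_0}\wedge p>0$ for some $\lambda_0$, and enlarging $\rho_{\lambda_0}$ by $q_{\lambda_0}\wedge p$ (which is disjoint from every $\rho_\mu$ since it lies under $p$) would contradict maximality; hence $\sup_\lambda\rho_\lambda=Id_F$ and $(\rho_\lambda)$ is a partition of unity. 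As $\rho_\lambda\leq q_\lambda$, the estimate $\rho_\lambda|x_\beta-x|\leq q_\lambda|x_\beta-x|\leq\varepsilon u$ holds for all $\beta\geq\lambda$, as required. I expect this disjointification, producing a partition indexed by the same arbitrary directed set $\Lambda$, to be the main obstacle, since there is no predecessor operation available to imitate the familiar $q_n-q_{n-1}$ construction valid for chains.

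For the sufficiency part I may assume $|x_\beta-x|\leq c$ for a fixed $c\in F_+$, the net being order bounded. Given $\varepsilon>0$ with its partition of unity $(\rho_\lambda)$, set $P_\beta=\sup_{\mu\leq\beta}\rho_\mu$ and $Q_\beta=Id_F-P_\beta$. Splitting $|x_\beta-x|=P_\beta|x_\beta-x|+Q_\beta|x_\beta-x|$ and using $\rho_\mu|x_\beta-x|\leq\varepsilon u$ for $\mu\leq\beta$ gives $|x_\beta-x|\leq\varepsilon u+Q_\beta c$. As $\beta$ increases $P_\beta$ increases, so $Q_\beta$ decreases, and a short argument (any projection below all $Q_\beta$ is disjoint from every $\rho_\mu$, hence below $(\sup_\mu\rho_\mu)^\perp=0$) shows $Q_\beta\downarrow 0$ and therefore $Q_\beta c\downarrow 0$. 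Putting $u_\lambda:=\sup_{\beta\geq\lambda}|x_\beta-x|$, monotonicity of $Q_\beta c$ gives $u_\lambda\leq\varepsilon u+Q_\lambda c$, whence $\inf_\lambda u_\lambda\leq\varepsilon u$ for every $\varepsilon>0$ and thus $\inf_\lambda u_\lambda=0$ by the Archimedean property. Since then $u_\lambda\downarrow 0$ and $|x_\beta-x|\leq u_\beta$, this is precisely $x_\beta\stackrel{\rm (o)}{\longrightarrow}x$.
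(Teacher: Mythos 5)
Your proof is correct, but it cannot be ``the same as the paper's'' for the simple reason that the paper offers no proof of this lemma at all: it is imported verbatim from \cite{Ku-Kut} (Proposition~5.2.7.2), and the author explicitly notes that it was obtained there by nonstandard (Boolean-valued) methods. What you have produced is a self-contained, entirely standard argument, and it holds up under scrutiny. In the necessity direction, the band projections $q_\lambda$ complementary to the bands generated by $(w_\lambda-\varepsilon u)^{+}$ do give $q_\lambda w_\lambda\leq\varepsilon u$, and the weak-order-unit hypothesis is used exactly where it must be, to force $\inf_\lambda\pi_\lambda=0$ from $\varepsilon\,\pi u\leq\inf_\lambda w_\lambda=0$. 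You were right to single out the disjointification step as the delicate point: since $\Lambda$ is an arbitrary directed set there is no ``$q_n-q_{n-1}$'' trick, and your Zorn argument is the correct replacement; the only detail worth making explicit is that a chain of admissible families has an upper bound because coordinatewise suprema of a chain of pairwise disjoint families remain pairwise disjoint, by the infinite distributive law in the complete Boolean algebra $\mathfrak{B}(F)$ (the same law that justifies $P_\beta y=\sup_{\mu\leq\beta}\rho_\mu y$ and $\inf_\beta Q_\beta=0$ in the converse direction). The sufficiency argument, reducing everything to $|x_\beta-x|\leq\varepsilon u+Q_\beta c$ with $Q_\beta c\downarrow 0$ and then letting $\varepsilon\downarrow 0$ via the Archimedean property, is also sound and correctly exploits the order boundedness hypothesis to define $u_\lambda=\sup_{\beta\geq\lambda}|x_\beta-x|$. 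Compared with the Boolean-valued route of \cite{Ku-Kut}, your proof buys elementarity and transparency about where each hypothesis (Dedekind completeness, weak order unit, order boundedness) enters, at the cost of the hands-on Zorn disjointification that the transfer-principle machinery hides.
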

\par
\begin{rem}
Observe that  every Dedekind complete vector lattice is an order dense ideal in  some Dedekind complete vector lattice with a weak order unit (\cite{Vul}, Theorem~4.7.2).
\end{rem}
\begin{lemma}\label{op-10}
Let $E,F$ be vector lattices,  $F$ be  Dedekind complete and let $\mathfrak{A}$ be the set of all weak order
units in $F$.
If  operators $T,S\in\mathcal{U}_{+}(E,F)$  are disjoint, then for every  $x\in E$, $u\in \mathfrak{A}$ and $\varepsilon>0$
there exists a partition of unity $(\pi_{\xi})_{\xi\in\Xi}$ in $\mathfrak{B}(F)$ and a family
$(x_\xi)_{\xi\in\Xi}$ of fragments of $x$, such that
$$
\pi_\xi\big(Tx_\xi+S(x-x_\xi)\big)\leq\varepsilon u \;\mbox{ for all } \; \xi\in\Xi.
$$
\end{lemma}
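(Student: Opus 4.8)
The plan is to translate the disjointness $T\wedge S=0$ into a single statement about fragments of $x$ and then to feed it into Lemma~\ref{op-100}. Write $y_{x'}:=Tx'+S(x-x')$ for $x'\in\mathcal{F}_{x}$; since a decomposition $x=g_{1}\sqcup g_{2}$ amounts to a choice of fragment $g_{1}\in\mathcal{F}_{x}$ with $g_{2}=x-g_{1}$, the Riesz--Kantorovich formula of Theorem~\ref{th-1}(2) gives
\[
0=(T\wedge S)(x)=\inf\{\,y_{x'}:\,x'\in\mathcal{F}_{x}\,\}.
\]
Let $\Lambda$ be the set of finite subsets $A\subseteq\mathcal{F}_{x}$ containing $0$, directed by inclusion, and put $z_{A}:=\bigwedge_{x'\in A}y_{x'}$. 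Then $(z_{A})_{A\in\Lambda}$ is a decreasing net with $0\le z_{A}\le y_{0}=Sx$, so it is order bounded, and its infimum equals that of the whole family, namely $0$; hence $z_{A}\downarrow 0$ and in particular $z_{A}\overset{(o)}{\longrightarrow}0$.

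First I would apply Lemma~\ref{op-100} to the net $(z_{A})_{A\in\Lambda}$ in $F$. For the prescribed $u\in\mathfrak{A}$ and $\varepsilon>0$ it yields a partition of unity $(\rho_{A})_{A\in\Lambda}$ in $\mathfrak{B}(F)$ such that $\rho_{A}z_{B}\le\varepsilon u$ whenever $B\supseteq A$; taking $B=A$ gives $\rho_{A}z_{A}\le\varepsilon u$ for every $A$. This is almost the desired conclusion, except that $z_{A}$ is a \emph{finite infimum} of the $y_{x'}$ rather than a single $y_{x'}$, whereas the statement requires, on each piece of the partition, one genuine fragment of $x$.

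The decisive step is therefore to split each $z_{A}$ by band projections into pieces on which it coincides with a single term. I claim that for $A=\{x_{1}',\dots,x_{k}'\}$ there is a partition of unity $q_{1}^{A},\dots,q_{k}^{A}$ in $\mathfrak{B}(F)$ with $q_{i}^{A}z_{A}=q_{i}^{A}y_{x_{i}'}$ for each $i$. For $k=2$ one takes $q_{1}$ to be the band projection onto $\{(y_{x_{1}'}-y_{x_{2}'})^{-}\}^{\bot\bot}$ and $q_{2}=\mathrm{Id}_{F}-q_{1}$, using that in a Dedekind complete vector lattice $a\wedge b=q_{1}a+q_{2}b$ for the band projection separating the region $a\le b$ from $a\ge b$; the general case follows by induction on $k$, refining the projection obtained for $z_{\{x_{1}',\dots,x_{k-1}'\}}\wedge y_{x_{k}'}$. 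Now set $\Xi:=\{(A,i):A\in\Lambda,\ 1\le i\le|A|\}$, $\pi_{(A,i)}:=\rho_{A}q_{i}^{A}$ and $x_{(A,i)}:=x_{i}'$. Since the $\rho_{A}$ form a partition of unity and, for each fixed $A$, the $q_{i}^{A}$ form a partition of unity, the family $(\pi_{\xi})_{\xi\in\Xi}$ is again a partition of unity; moreover
\[
\pi_{(A,i)}y_{x_{i}'}=\rho_{A}q_{i}^{A}y_{x_{i}'}=\rho_{A}q_{i}^{A}z_{A}\le\rho_{A}z_{A}\le\varepsilon u,
\]
which is exactly $\pi_{\xi}\big(Tx_{\xi}+S(x-x_{\xi})\big)\le\varepsilon u$. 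The main obstacle is precisely this passage from finite infima to single fragments: Lemma~\ref{op-100} controls only the net of finite infima, so one must produce the band-projection decomposition of $a\wedge b$ and verify that it merges with the partition $(\rho_{A})$ into a single partition of unity without losing the estimate.
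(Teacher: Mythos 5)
Your proof is correct and follows essentially the same route as the paper: pass to the downward-directed net of finite infima of the elements $Tx'+S(x-x')$, apply Lemma~\ref{op-100} to obtain a partition of unity controlling those infima, and then refine each piece by further band projections so that on each resulting piece the infimum coincides with a single term. The only difference is cosmetic --- you carry out this last refinement abstractly via the projections onto $\{(a-b)^{-}\}^{\bot\bot}$ and induction, whereas the paper does it inside the $C_{\infty}(Q)$ representation by taking closures of the open sets on which one $f_{\xi}$ is strictly minimal; both implementations are valid.
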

\begin{proof}
Take any $x\in E$. Denote by $\Xi$, the set of all pairs $\xi=(y,z)\in\mathcal{F}_x\times\mathcal{F}_x$ of
mutually disjoint fragments of $x$, such that $y+z=x$.
For any $\xi=(y,x-y)\in \Xi$ put $f_\xi=Ty+S(x-y)$.
Due to  formula (2) of Theorem \ref{th-1} the disjointness
of the operators $S$ and $T$ implies $\inf\limits_{\xi\in\Xi}\{f_{\xi}\}=0$.
Denote by $\Delta$ the collection of all finite subsets of $\Xi$ ordered as usual by inclusion,
i.e. $\alpha\leq \alpha'$ iff $\alpha\subset \alpha'$.
Introduce a set $(y_{\alpha})_{\alpha\in\Delta}$ of all infima of finitely many elements of the set
$\{f_\xi\colon \xi\in \Xi\}$, i.e. if $\alpha\in\Delta$ is a finite set  $\alpha=\{\xi_{\alpha_{1}},\dots,\xi_{\alpha_{n}}\}$,
where $\xi_{\alpha_k}\in \Xi$ for $k=1,\ldots,n$, then
\[
       y_\alpha=\bigwedge\limits_{i=1}^n f_{\xi_{\alpha_i}}
\]
The set $(y_{\alpha})_{\alpha\in\Delta}$ is downwards directed and $\olim\limits_{\alpha\in\Delta}y_{\alpha}=0$.
By Proposition~\ref{op-100}, for every $\varepsilon>0$ and $u\in\mathfrak{A}$
there exists a partition of unity $(\rho_{\alpha})_{\alpha\in\Delta}$ in $\mathfrak{B}(F)$ such that
$$
\rho_{\alpha}(y_{\alpha})\leq \varepsilon u \;\mbox{ for all }\;  \alpha\in\Delta.
$$
In particular, $\rho_\alpha(f_\xi)<\varepsilon u$ if $\alpha=\xi$.
\par
Identify now $F$ with a vector sublattice of the Dedekind complete vector lattice
$C_{\infty}(Q)$ of all extended real valued continuous functions on some extremally disconnected compact
space $Q$ (more exactly with its image under some vector lattice isomorphism), where the choosen weak order
unit $u$ is mapped onto the constant function  $\bold{1}$ on $Q$ (see \cite{AbrAl}, Theorem 3.35).
Then the order projections $(\rho_{\alpha})_{\alpha\in\Delta}$
(of the above partition of unity) are the multiplication operators in the space $C_{\infty}(Q)$
generated by the characteristic functions $\bold{1}_{Q_{\alpha}}$, respectively,
where $Q_{\alpha}$ for all $\alpha\in\Delta$ are closed-open subsets of $Q$
such that $Q=\bigcup\limits_{\alpha}Q_{\alpha}$ and $Q_{\alpha}\cap Q_{\alpha'}=\emptyset$
for every $\alpha,\alpha'\in \Delta$, $\alpha\neq\alpha'$. The supremum $\sup\limits_{\alpha\in\Delta} \rho_\alpha$ is
the identity operator $I_F$.
\\
For $\alpha\in \Delta$ and $\xi\in\Xi$ define the set
\[
     A_\xi^\alpha=\{t\in Q_\alpha\colon f_\xi(t)< f_\beta(t), \, \beta\in \alpha,\, \beta\neq \xi \}
\]
and denote by $\overline{A_\xi^\alpha}$ its closure in $Q_\alpha$ and, consequently in $Q$.
So $\overline{A_\xi^\alpha}$  are closed-open subsets of $Q$
for every   $\alpha\in \Delta$, $\xi\in\Xi$  and, mutually disjoint if at least one index is different $\xi\neq \xi'$
or $\alpha\neq \alpha'$.
Denote by $\rho_\xi^\alpha$ the multiplication operator
generated by the characteristic function $\bold{1}_{\overline{A_{\xi}^{\alpha}}}$,
i.e.  $\rho_\xi^\alpha(f)= f\cdot\bold{1}_{\overline{A_\xi^\alpha}}$ for any function $f\in C_{\infty}(Q)$.
It is clear that $\rho_\xi^\alpha$ is an order projection in $C_{\infty}(Q)$ and
$\overline{A_\xi^\alpha}\subset Q_\alpha$ implies $\rho_\xi^\alpha\leq \rho_\alpha$.
Hence $\rho_\xi^\alpha(f_\xi)\leq \varepsilon u$ for every  $\xi\in\Xi$ and every $\alpha\in\Delta$.
By what has been mentioned above the order projections $\rho_\xi^\alpha$
are mutually disjoint, whenever  $\xi\neq \xi'$ or $\alpha\neq \alpha'$.
Therefore, the order projections $\pi_\xi=\sup\limits_{\alpha\in\Delta}\rho_\xi^\alpha$ and
$\pi_{\xi'}=\sup\limits_{\alpha\in\Delta}\rho_{\xi'}^\alpha$ are mutually disjoint as well.
We show that the supremum of all $\pi_\xi$ is the identity operator.
By assuming the contrary there is a nonzero order projection $\gamma$
which is disjoint to each projection $\pi_\xi$ what causes its disjointness to each
$\rho_\xi^\alpha$ and finally, $\gamma$ is disjoint to each $\rho_\alpha$.
This contradicts the fact that $(\rho_\alpha)_{\alpha\in\Delta}$ is a partition of unity.
Thus
$(\pi_\xi)_{\xi\in\Xi}$ is a partition of unity and
$$
\pi_{\xi}\big(Tx_{\xi}+S(x-x_{\xi})\big)\leq \varepsilon u \; \mbox{ for every } \; \xi\in\Xi.
$$
\end{proof}

\begin{lemma}\label{le:1}
Let $E,F,\mathfrak{A}_{F}$ be the same as in the Lemma~\ref{op-10}, $S,T\in\mathcal{U}_{+}(E,F)$.
If $S\bot T$, then  for every $x\in E$, $\varepsilon>0$, $\bold{1}\in\mathfrak{A}_{F}$
there exists a partition of unity   $(\rho_{\xi})_{\xi\in\Xi}$ in $\mathfrak{B}(F)$,
and a family $(x_{\xi})_{\xi\in\Xi}$ of fragments of $x$ such that
$\rho_{\xi}\pi^{x_{\xi}}T(x)\leq\varepsilon\bold{1}$ and
$\rho_{\xi}(S-\rho_{\xi}\pi^{x_{\xi}}S)x\leq\varepsilon\bold{1}$ for every $\xi\in\Xi$.
\end{lemma}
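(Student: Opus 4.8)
The plan is to read this off Lemma~\ref{op-10} after making one elementary identification. Applying Lemma~\ref{op-10} with $u=\mathbf{1}$ to the disjoint pair $S,T$ produces a partition of unity $(\rho_\xi)_{\xi\in\Xi}$ in $\mathfrak{B}(F)$ and a family $(x_\xi)_{\xi\in\Xi}$ of fragments of $x$, indexed so that $\xi$ corresponds to the decomposition $x=x_\xi\sqcup(x-x_\xi)$, with
\[
\rho_\xi\big(Tx_\xi+S(x-x_\xi)\big)\leq\varepsilon\mathbf{1}\qquad\text{for all }\xi\in\Xi .
\]
Since $Tx_\xi\geq 0$ and $S(x-x_\xi)\geq 0$ and $\rho_\xi$ is a positive projection, I would split this single inequality into the two bounds $\rho_\xi Tx_\xi\leq\varepsilon\mathbf{1}$ and $\rho_\xi S(x-x_\xi)\leq\varepsilon\mathbf{1}$, which will feed the two required estimates.

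The crux is the identity $\pi^{x_\xi}T(x)=T(x_\xi)$, and likewise for $S$. Because $x_\xi\sqsubseteq x$, the lateral ideal property of $\mathcal{F}_x$ (Lemma~\ref{Ex1}) gives $\mathcal{F}_x\cap\mathcal{F}_{x_\xi}=\mathcal{F}_{x_\xi}$, so $\pi^{x_\xi}T(x)=\sup\{Ty:\,y\sqsubseteq x_\xi\}$. For any $y\sqsubseteq x_\xi$ one has $x_\xi=y\sqcup(x_\xi-y)$, whence orthogonal additivity and positivity yield $Ty\leq Ty+T(x_\xi-y)=T(x_\xi)$; taking $y=x_\xi$ shows the supremum is attained, so $\pi^{x_\xi}T(x)=T(x_\xi)$. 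With this the first claim is immediate:
\[
\rho_\xi\pi^{x_\xi}T(x)=\rho_\xi T(x_\xi)\leq\varepsilon\mathbf{1}.
\]

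For the second bound I would first record that $\rho_\xi\pi^{x_\xi}S$ is a genuine fragment of $S$ by Lemma~\ref{le:01}, so its Boolean complement $S-\rho_\xi\pi^{x_\xi}S$ is again a positive operator; hence $\rho_\xi(S-\rho_\xi\pi^{x_\xi}S)x$ is a legitimate nonnegative element of $F$ and the asserted inequality is a meaningful upper bound. Evaluating pointwise, using $\pi^{x_\xi}S(x)=S(x_\xi)$, the idempotency $\rho_\xi\rho_\xi=\rho_\xi$, and the orthogonal additivity $Sx=S(x_\xi)+S(x-x_\xi)$, I get
\[
\rho_\xi\big(S-\rho_\xi\pi^{x_\xi}S\big)(x)=\rho_\xi Sx-\rho_\xi S(x_\xi)=\rho_\xi\big(Sx-S(x_\xi)\big)=\rho_\xi S(x-x_\xi)\leq\varepsilon\mathbf{1},
\]
the last inequality being the second of the two split bounds above.

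I expect no serious obstacle here: once the identity $\pi^{x_\xi}T(x)=T(x_\xi)$ is in hand, the statement is essentially a repackaging of Lemma~\ref{op-10} in terms of the elementary fragments $\rho_\xi\pi^{x_\xi}T$ and $\rho_\xi\pi^{x_\xi}S$. The only points demanding care are the bookkeeping that the complementary fragment $S-\rho_\xi\pi^{x_\xi}S$ is positive (so that the second inequality concerns a nonnegative element), and the repeated, but routine, use of orthogonal additivity together with the idempotency of the projections $\rho_\xi$.
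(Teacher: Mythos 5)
Your proposal is correct and follows essentially the same route as the paper: apply Lemma~\ref{op-10}, split the resulting inequality using positivity, and rewrite the two terms via the identity $\pi^{x_\xi}T(x)=T(x_\xi)$ and the idempotency of $\rho_\xi$. The only difference is that you spell out the verification of $\pi^{x_\xi}T(x)=T(x_\xi)$, which the paper merely states as an observation.
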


\begin{proof}
Observe that for every $y\in \mathcal{F}_{x}$, $x\in E$ we have $\pi^{y}Tx=Ty$. Fix a weak order unit $\bold{1}$ and $\varepsilon>0$. By  Lemma~\ref{op-10} there exist a partition of unity   $(\rho_{\xi})_{\xi\in\Xi}$ in $F$,
and a family $(x_{\xi})_{\xi\in\Xi}$ of fragments of $x$ such that
$$
\rho_\xi\big(Tx_\xi+S(x-x_\xi)\big)\leq\varepsilon u \;\mbox{ for all } \; \xi\in\Xi.
$$
Consequently,
$\rho_{\xi}Tx_{\xi}=\rho_{\xi}\pi^{x_{\xi}}Tx\leq\varepsilon\bold{1}$ and
$$
\rho_{\xi}S(x-x_{\xi})=\rho_{\xi}Sx-\rho_{\xi}Sx_{\xi}=
\rho_{\xi}(S-\rho_{\xi}\pi^{x_{\xi}}S)x\leq\varepsilon\bold{1}.
$$
\end{proof}

\begin{lemma}\label{le:2}
Let $E,F,\mathfrak{A}_{F}$ be the same as in the Lemma~\ref{op-10}, $T\in\mathcal{U}_{+}(E,F)$.
If $S\in\mathcal{F}_{T}$ then for every $x\in E$, $\varepsilon>0$, $\bold{1}\in\mathfrak{A}_{F}$
there exists a partition of unity   $(\rho_{\xi})_{\xi\in\Xi}$ in $\mathfrak{B}(F)$,
and a family $(x_{\xi})_{\xi\in\Xi}$ of fragments of $x$, such that
$\rho_{\xi}|S-\rho_{\xi}\pi^{x_{\xi}}T|x\leq\varepsilon\bold{1}$ for every $\xi\in\Xi$.
\end{lemma}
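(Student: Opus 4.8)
The plan is to reduce the statement to Lemma~\ref{le:1} by splitting $T$ along the fragment $S$. Since $S\in\mathcal{F}_{T}$, the operator $R:=T-S$ is a positive abstract Uryson operator with $S\wedge R=0$, that is $S\perp R$. First I would fix $x\in E$, a weak order unit $\mathbf 1\in\mathfrak{A}_{F}$ and $\varepsilon>0$, and apply Lemma~\ref{le:1} to the disjoint pair $R,S$, with $R$ playing the role of the operator ``$T$'' of that lemma and $S$ the role of ``$S$'', using $\varepsilon/2$ in place of $\varepsilon$. This produces a partition of unity $(\rho_{\xi})_{\xi\in\Xi}$ in $\mathfrak{B}(F)$ and a family $(x_{\xi})_{\xi\in\Xi}$ of fragments of $x$ with
\[
\rho_{\xi}\pi^{x_{\xi}}R(x)\leq\tfrac{\varepsilon}{2}\mathbf 1
\qquad\text{and}\qquad
\rho_{\xi}\big(S-\rho_{\xi}\pi^{x_{\xi}}S\big)(x)\leq\tfrac{\varepsilon}{2}\mathbf 1 \qquad(\xi\in\Xi).
\]
The candidate elementary fragment of $T$ is $P_{\xi}:=\rho_{\xi}\pi^{x_{\xi}}T$, which lies in $\mathcal{F}_{T}$ by Lemma~\ref{le:01}.

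The delicate point is that the target controls the operator modulus $|S-P_{\xi}|$ evaluated at $x$, whereas Theorem~\ref{th-1}(5) only gives $|(S-P_{\xi})(x)|\leq|S-P_{\xi}|(x)$, the wrong direction; a pointwise estimate at $x$ therefore does not suffice, and this is where I expect the main obstacle. To get past it I would work at the operator level in the Dedekind complete vector lattice $\mathcal{U}(E,F)$ via the triangle inequality
\[
|S-P_{\xi}|\leq\big|S-\rho_{\xi}\pi^{x_{\xi}}S\big|+\big|\rho_{\xi}\pi^{x_{\xi}}S-P_{\xi}\big|,
\]
and identify both summands as positive operators. For the first, the supremum formula defining $\pi^{x_{\xi}}S$ together with Theorem~\ref{th-1}(3) (with $S$ positive, so $S^{+}=S$) yields $0\leq\rho_{\xi}\pi^{x_{\xi}}S\leq S$, hence $|S-\rho_{\xi}\pi^{x_{\xi}}S|=S-\rho_{\xi}\pi^{x_{\xi}}S$. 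For the second, I would observe that the assignment $U\mapsto\pi^{x_{\xi}}U$ is monotone and subadditive on positive operators (immediate from its definition as a supremum), so that $\pi^{x_{\xi}}S\leq\pi^{x_{\xi}}T\leq\pi^{x_{\xi}}S+\pi^{x_{\xi}}R$; since $\rho_{\xi}$ is a band projection on $F$ it commutes with the lattice modulus, whence
\[
\big|\rho_{\xi}\pi^{x_{\xi}}S-P_{\xi}\big|=\rho_{\xi}\big(\pi^{x_{\xi}}T-\pi^{x_{\xi}}S\big)\leq\rho_{\xi}\pi^{x_{\xi}}R.
\]

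Combining the triangle inequality with these two estimates gives the operator inequality $|S-P_{\xi}|\leq\big(S-\rho_{\xi}\pi^{x_{\xi}}S\big)+\rho_{\xi}\pi^{x_{\xi}}R$. Evaluating at $x$ (sums of operators are computed pointwise), applying the positive projection $\rho_{\xi}$, and using $\rho_{\xi}^{2}=\rho_{\xi}$, I would obtain
\[
\rho_{\xi}|S-P_{\xi}|(x)\leq\rho_{\xi}\big(S-\rho_{\xi}\pi^{x_{\xi}}S\big)(x)+\rho_{\xi}\pi^{x_{\xi}}R(x)\leq\tfrac{\varepsilon}{2}\mathbf 1+\tfrac{\varepsilon}{2}\mathbf 1=\varepsilon\mathbf 1
\]
for every $\xi\in\Xi$, which is exactly the assertion since $P_{\xi}=\rho_{\xi}\pi^{x_{\xi}}T$. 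Thus, once the monotonicity and subadditivity of $U\mapsto\pi^{x_{\xi}}U$ and the commutation of $\rho_{\xi}$ with the modulus are recorded, the proof is just the triangle inequality fed by the two conclusions of Lemma~\ref{le:1}, with a harmless halving of $\varepsilon$ at the start.
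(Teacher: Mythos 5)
Your argument is correct and follows essentially the same route as the paper: apply Lemma~\ref{le:1} to the disjoint pair $S$ and $T-S$, then estimate $\rho_{\xi}|S-\rho_{\xi}\pi^{x_{\xi}}T|x$ by the triangle inequality through the intermediate term $\rho_{\xi}\pi^{x_{\xi}}S$. Your version is in fact slightly more careful than the paper's, since you halve $\varepsilon$ (the paper's two $\varepsilon$-bounds strictly add to $2\varepsilon\bold{1}$) and you make explicit the monotonicity/subadditivity of $U\mapsto\pi^{x_{\xi}}U$ and the commutation of $\rho_{\xi}$ with the modulus, which the paper uses tacitly.
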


\begin{proof}
Using Lemma~\ref{le:1}   we have
$$
\rho_{\xi}|S-\rho_{\xi}\pi^{x_{\xi}}T|x\leq\rho_{\xi}|S-\rho_{\xi}\pi^{x_{\xi}}S|x+
\rho_{\xi}|\rho_{\xi}\pi^{x_{\xi}}S-\rho_{\xi}\pi^{x_{\xi}}T|x=
$$
$$
=\rho_{\xi}|S-\rho_{\xi}\pi^{x_{\xi}}S|x+
\rho_{\xi}|\rho_{\xi}\pi^{x_{\xi}}(T-S)|x\leq\varepsilon\bold{1}.
$$
\end{proof}

\begin{lemma}\label{le:3}
Let $E,F$ be the same as in Lemma~\ref{le:1},  $T\in\mathcal{U}_{+}(E,F)$ and $S\in\mathcal{F}_{T}$. Then
\begin{enumerate}
\item for every $x\in E$, $\varepsilon>0$, $\bold{1}\in\mathfrak{A}_{F}$ there exists
$G_{x}\in\mathcal{A}_{T}^{\uparrow}$, so that $|S-G_{x}|x\leq\varepsilon\bold{1}$;
\item for every $x\in E$  there exists
$R_{x}\in\mathcal{A}_{T}^{\uparrow\downharpoonleft}$, so that $|S-R_{x}|x=0$.
\end{enumerate}
\end{lemma}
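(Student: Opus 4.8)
For part (1) the plan is to realise $G_x$ as the supremum of an increasing net of elementary fragments supplied by Lemma~\ref{le:2}, and for part (2) to deduce the exact equality by passing to a decreasing sequence of such approximants.

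Fix $x,\varepsilon,\mathbf{1}$ and apply Lemma~\ref{le:2} to get a partition of unity $(\rho_\xi)_{\xi\in\Xi}$ in $\mathfrak{B}(F)$ and fragments $(x_\xi)$ of $x$ with $\rho_\xi|S-\rho_\xi\pi^{x_\xi}T|x\le\varepsilon\mathbf{1}$. For a finite $\alpha\subset\Xi$ put $G_\alpha=\sum_{\xi\in\alpha}\rho_\xi\pi^{x_\xi}T$; since the $\rho_\xi$ are mutually disjoint order projections each $G_\alpha\in\mathcal{A}_T$, and as every summand is positive the net $(G_\alpha)$ (ordered by inclusion) increases. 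From $\rho_\xi\pi^{x_\xi}T\le\rho_\xi T$ and disjointness of the $\rho_\xi$ we get $G_\alpha\le T$, so by Dedekind completeness of $\mathcal{U}(E,F)$ (Theorem~\ref{th-1}) the supremum $G_x:=\sup_\alpha G_\alpha=\bigvee_{\xi\in\Xi}\rho_\xi\pi^{x_\xi}T$ exists and lies in $\mathcal{A}_T^{\uparrow}$. The key computation is $\rho_\xi G_x=\rho_\xi\pi^{x_\xi}T$: indeed $\rho_\xi$ is order continuous, so $\rho_\xi G_x=\sup_\alpha\rho_\xi G_\alpha$, and mutual disjointness of the projections collapses $\rho_\xi G_\alpha$ to $\rho_\xi\pi^{x_\xi}T$ as soon as $\xi\in\alpha$ and to $0$ otherwise. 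Since $\rho_\xi$ is an order-continuous lattice homomorphism on $F$, the formulas of Theorem~\ref{th-1} give $\rho_\xi|W|=|\rho_\xi W|$ for $W\in\mathcal{U}(E,F)$, hence $\rho_\xi|S-G_x|=|\rho_\xi S-\rho_\xi\pi^{x_\xi}T|=\rho_\xi|S-\rho_\xi\pi^{x_\xi}T|$ and thus $\rho_\xi|S-G_x|x\le\varepsilon\mathbf{1}$ for every $\xi$. As $(\rho_\xi)$ is a partition of unity, $|S-G_x|x=\sup_\xi\rho_\xi|S-G_x|x\le\varepsilon\mathbf{1}$, proving (1).

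For (2) apply (1) with $\varepsilon=1/n$ to obtain $G_n\in\mathcal{A}_T^{\uparrow}$ with $|S-G_n|x\le\frac1n\mathbf{1}$. Two facts make the conclusion almost automatic once the $G_n$ are decreasing. First, $|S-G_n|$ is positive and $|S-G_n|g\le|S-G_n|x$ for every $g\sqsubseteq x$ (because $|S-G_n|x=|S-G_n|g+|S-G_n|(x-g)$), so the approximation propagates: $|S-G_n|g\le\frac1n\mathbf{1}$ for all $g\in\mathcal{F}_x$. Second, if $(H_n)$ decreases in $\mathcal{U}(E,F)$ to $R:=\inf_nH_n$, then $R(g)=\inf_nH_n(g)$ for every $g$: the pointwise infima $g\mapsto\inf_nH_n(g)$ define (by order continuity of addition on decreasing sequences) an orthogonally additive order bounded operator dominated by each $H_n$, hence below $R$, while $R\le H_n$ gives the reverse inequality. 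Consequently, \emph{if} one can produce a decreasing sequence $G_n\in\mathcal{A}_T^{\uparrow}$ with $|S-G_n|x\le\frac1n\mathbf{1}$, then $R_x:=\inf_nG_n\in\mathcal{A}_T^{\uparrow\downharpoonleft}$, and for each $g\sqsubseteq x$ the values $G_n(g)$ decrease to $S(g)$, so $R_x(g)=\inf_nG_n(g)=S(g)$. By Theorem~\ref{th-1}(3) this forces $(S-R_x)^+x=\sup\{(S-R_x)g:g\sqsubseteq x\}=0$ and likewise $(R_x-S)^+x=0$, whence $|S-R_x|x=0$.

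The main obstacle is precisely the italicised step: forcing the approximants into a decreasing sequence. The naive choice $H_n=\bigwedge_{k\le n}G_k$ fails, since $(S-H_n)^+=\bigvee_{k\le n}(S-G_k)^+$ retains the lower-side error of the early terms, and $\inf_nG_n$ then undershoots $S$ at $x$. A decreasing approximating sequence is therefore necessarily one whose terms approximate $S$ \emph{from above} at $x$ (indeed $(S-G_n)^+x=0$ is then forced), and the difficulty is to realise such one-sided approximants inside $\mathcal{A}_T^{\uparrow}$. Here the disjointness $S\wedge(T-S)=0$ is essential: the excess always satisfies $0\le(G_n-S)^+\le T-S$, so it lies in the band of $T-S$, disjoint from $S$. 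Exploiting this — by strengthening Lemma~\ref{le:2} to an over-approximation of $S$ at $x$ and refining the partitions so that successive approximants decrease — is what I expect to be the technical heart needed to finish the construction.
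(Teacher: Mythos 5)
Your part (1) is correct and is essentially the paper's own argument: sum the $\rho_\xi\pi^{x_\xi}T$ over finite subsets of $\Xi$, take the supremum $G_x$ of the resulting increasing net of elementary fragments, and push the estimate of Lemma~\ref{le:2} through the partition of unity to conclude $|S-G_x|x\le\varepsilon\mathbf{1}$.

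Part (2), however, is genuinely unfinished: you correctly diagnose that a decreasing sequence of approximants is needed and that infima of the $G^n_x$ undershoot, but the device that closes the gap is not the one you point toward. No strengthening of Lemma~\ref{le:2} to a one-sided over-approximation of $S$ is required, and the disjointness $S\wedge(T-S)=0$ plays no further role at this step. The paper's resolution is to take \emph{summable} errors $\varepsilon_n=2^{-n}$ and form the suprema of the tails: $C_x^k=\bigvee_{n=k}^{\infty}G_x^n$. Each $C_x^k$ is the increasing limit of the finite suprema $C_x^{k,i}=\bigvee_{n=k}^{k+i}G_x^n\in\mathcal{A}_T^{\uparrow}$, hence lies in $\mathcal{A}_T^{\uparrow\upharpoonleft}=\mathcal{A}_T^{\uparrow}$, and the sequence $(C_x^k)_k$ is decreasing in $k$ automatically, with no refinement of partitions. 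The error is controlled because $S-\bigvee_{n}G_x^n=\bigwedge_{n}(S-G_x^n)$ and $\bigl|\bigwedge_n a_n\bigr|\le\sum_n|a_n|$, so that
\[
\bigl|S-C_x^{k,i}\bigr|x\le\sum_{n=k}^{k+i}\bigl|S-G_x^n\bigr|x\le\sum_{n=k}^{\infty}2^{-n}\mathbf{1}=2^{-(k-1)}\mathbf{1},
\]
whence $|S-C_x^k|x\le 2^{-(k-1)}\mathbf{1}$ and $R_x=\inf_kC_x^k\in\mathcal{A}_T^{\uparrow\downharpoonleft}$ satisfies $|S-R_x|x=0$. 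In other words, the upper-side errors accumulated by taking a supremum are harmless precisely because they are bounded by the sum of the tail errors, which is small; your auxiliary observations (propagation of the estimate to fragments of $x$, evaluation commuting with monotone limits) are fine but serve a construction you never carry out.
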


\begin{proof}
Let us to prove $(1)$. By Lemma~\ref{le:2} there exists a partition of unity   $(\rho_{\xi})_{\xi\in\Xi}$ in $\mathfrak{B}(F)$,
and a family $(x_{\xi})_{\xi\in\Xi}$
of fragments of  $x$ such that
$\rho_{\xi}|S-\rho_{\xi}\pi^{x_{\xi}}T|x\leq\varepsilon\bold{1}$ for  $\xi\in\Xi$.
By $\Delta$ we denote the system of all finite subsets of $\Xi$. It is  ordered by inclusion.
Surely, $\Delta$ is a directed set. For every $\theta\in\Delta$ set
$G_{\theta}=\sum\limits_{\theta\in\Delta}\rho_{\xi}\pi^{x_{\xi}}T$. The net $(G_{\theta})_{\theta\in\Delta}$ is increasing.
Let $G_{x}=\sup(G_{\theta})_{\theta\in\Delta}$. Then $G_{x}\in\mathcal{A}_{T}^{\uparrow}$ and we may write
$$
\rho_{\xi}|S-G_{\theta}|x=
\rho_{\xi}|S-\sum\limits_{\theta\in\Delta}\rho_{\xi}\pi^{x_{\xi}}T|x\leq\varepsilon\bold{1}
$$
for every $\xi\in\Xi$ and every $\theta\geq\{\xi\}$. Therefore $\rho_{\xi}|S-G_{x}|x\leq\varepsilon\bold{1}$ for every
$\xi\in\Xi$ and $|S-G_{x}|x\leq\varepsilon\bold{1}$.

Now we prove $(2)$. Fix any $\bold{1}\in\mathfrak{A}_{F}$. For $\varepsilon_{n}=\frac{1}{2^{n}}$ there exists
$G_{x}^{n}\in\mathcal{A}_{T}$ such that $|S-G_{x}^{n}|x\leq\frac{1}{2^{n}}\bold{1}$. Let $C_{x}^{k}=\bigvee\limits_{n=k}^{\infty}G_{x}^{n}$ and
$C_{x}^{k,i}=\bigvee\limits_{n=k}^{n=k+i}G_{x}^{n}$. Since $\mathcal{A}_{T}$ is a subalgebra of $\mathcal{F}_{T}$, one has $C_{x}^{k,i}\in\mathcal{A}_{T}^{\uparrow}$  and
$C_{x}^{k,i}\uparrow C_{x}^{k}\in\mathcal{A}_{T}^{\uparrow\upharpoonleft}=\mathcal{A}_{T}^{\uparrow}$. Then we have
$$
\Big|S-C_{x}^{k,i}\Big|x=\Big|S-\bigvee\limits_{n=k}^{n=k+i}G_{x}^{n}\Big|x=
\Big|\bigwedge\limits_{n=k}^{n=k+i}(S-G_{x}^{n})\Big|x\leq
$$
$$
\leq\sum\limits_{n=k}^{n=k+i}\Big|S-G_{x}^{n}\Big|x\leq\sum\limits_{n=k}^{\infty}\frac{1}{2^{n}}\bold{1}\leq
\frac{1}{2^{k-1}}\bold{1}.
$$
So we may write  $|S-C_{x}^{k}|\leq\frac{1}{2^{k-1}}\bold{1}$. The sequence $(C_{x}^{k})$ is decreasing. Let $R_{x}=\inf{C_{x}^{k}}$. Then $R_{x}\in\mathcal{A}_{T}^{\uparrow\downharpoonleft}$ and $|S-R_{x}|x=0$.
\end{proof}

\begin{rem}\label{rem:31}
Observe that $R_{x}y=0$ for every $y$ such that $\mathcal{F}_{x}\cap\mathcal{F}_{y}=0$.
Moreover, if $y\in\mathcal{F}_{x}$ and $|S-R_{x}|x=0$ we can write $0\leq|S-R_{x}|y\leq|S-R_{x}|x=0$, and therefore $|S-R_{x}|y=0$
for every $y\in\mathcal{F}_{x}$.
\end{rem}

\begin{lemma}\label{le:4}
Let $E,F$ be the same as in Lemma~\ref{le:3},  $T\in\mathcal{U}_{+}(E,F)$, $x\in E$ and
$S\in\mathcal{F}_{T}$. Then there exists a $G\in\mathcal{A}_{T}^{\uparrow\downarrow}$ such that:
$$
0\leq G\leq S \,\,\text{and}\,\,  Gx=Sx.
$$
\end{lemma}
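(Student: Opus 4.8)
The plan is to repair the approximant furnished by Lemma~\ref{le:3}(2) by truncating it below $S$. First I would apply Lemma~\ref{le:3}(2) to the given $x$ to obtain an operator $R_{x}\in\mathcal{A}_{T}^{\uparrow\downharpoonleft}$ with $|S-R_{x}|x=0$. Since every sequence is a net we have $\mathcal{A}_{T}^{\uparrow\downharpoonleft}\subseteq\mathcal{A}_{T}^{\uparrow\downarrow}$, so $R_{x}\in\mathcal{A}_{T}^{\uparrow\downarrow}$, and Remark~\ref{rem:31} gives $R_{x}y=Sy$ for every $y\in\mathcal{F}_{x}$. Thus $R_{x}$ already agrees with $S$ at $x$, but it may fail to satisfy $R_{x}\leq S$; to fix this I set $G:=R_{x}\wedge S$. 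As $R_{x}$ and $S$ are positive, $0\leq G\leq S$ is immediate, and two points remain: the equality $Gx=Sx$ and the membership $G\in\mathcal{A}_{T}^{\uparrow\downarrow}$.

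For the equality I would invoke formula (2) of Theorem~\ref{th-1},
\[
Gx=(R_{x}\wedge S)(x)=\inf\{R_{x}g_{1}+Sg_{2}:\,x=g_{1}\sqcup g_{2}\}.
\]
Given any decomposition $x=g_{1}\sqcup g_{2}$, both $g_{1},g_{2}$ are fragments of $x$, so by Remark~\ref{rem:31} and Theorem~\ref{th-1}(5) we have $|(S-R_{x})g_{1}|\leq|S-R_{x}|g_{1}=0$, whence $R_{x}g_{1}=Sg_{1}$; the orthogonal additivity of $S$ then yields $R_{x}g_{1}+Sg_{2}=Sg_{1}+Sg_{2}=S(g_{1}+g_{2})=Sx$. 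Hence every element of the set under the infimum equals $Sx$, and therefore $Gx=Sx$.

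The hard part will be the membership $G=R_{x}\wedge S\in\mathcal{A}_{T}^{\uparrow\downarrow}$. The danger is that meeting with $S$ drags us out of the class, because $S$ is only assumed to be a fragment of $T$ and is not itself known to lie in $\mathcal{A}_{T}^{\uparrow\downarrow}$ --- that kind of representation is exactly what the whole ``up-and-down'' machinery is meant to establish. My plan is to push the meet through the defining order limits using the distributivity of $\wedge$ over suprema and infima in the Dedekind complete lattice $\mathcal{U}(E,F)$: writing $R_{x}=\inf_{k}C_{x}^{k}$ with $C_{x}^{k}\in\mathcal{A}_{T}^{\uparrow}$ and $C_{x}^{k}=\sup_{\alpha}D_{\alpha}^{k}$ with $D_{\alpha}^{k}\in\mathcal{A}_{T}$, one gets $G=\inf_{k}\sup_{\alpha}(D_{\alpha}^{k}\wedge S)$. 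Everything then reduces to analysing the meets $D\wedge S$ of a single elementary fragment $D$ with $S$ and to showing that the inner suprema can be kept inside $\mathcal{A}_{T}^{\uparrow}$, so that the outer decreasing infimum lands in $\mathcal{A}_{T}^{\uparrow\downarrow}$; for this I would rely on the fact that $\mathcal{A}_{T}$ is a subalgebra of $\mathcal{F}_{T}$ together with the explicit description of the elementary fragments through the operators $\pi^{x_{i}}T$. Keeping these truncated pieces inside the correct closure class while preserving both limits is where I expect the genuine work to lie.
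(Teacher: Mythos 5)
Your construction $G:=R_{x}\wedge S$ does yield $0\leq G\leq S$ and $Gx=Sx$ correctly (the computation via Theorem~\ref{th-1}(2) and Remark~\ref{rem:31} is fine), but the one property the lemma is really about --- $G\in\mathcal{A}_{T}^{\uparrow\downarrow}$ --- is exactly the point you leave open, and the route you sketch for it does not close. Distributing $\wedge S$ through the limits gives $G=\inf_{k}\sup_{\alpha}(D_{\alpha}^{k}\wedge S)$ with the right $\inf$--$\sup$ shape, but the inner terms $D_{\alpha}^{k}\wedge S$ are meets of an elementary fragment with an \emph{arbitrary} fragment $S$ of $T$, and there is no reason for these to lie in $\mathcal{A}_{T}$ or for their suprema to lie in $\mathcal{A}_{T}^{\uparrow}$ unless one already knows how $S$ sits relative to the elementary fragments --- which is precisely what Theorem~\ref{frag-2}, via this lemma, is supposed to establish. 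So the argument is circular at its crucial step, and this is a genuine gap rather than a routine verification.

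The paper sidesteps the problem by never forming a lattice operation with $S$ at all: it sets $W=\{R\in\mathcal{A}_{T}^{\uparrow\downharpoonleft}:\,|S-R|x=0\}$, notes that $W$ is nonempty by Lemma~\ref{le:3}(2) and downward directed (the ups and downs of $\mathcal{F}_{T}$ are closed under finite infima), and takes $G=\inf W$. Membership is then automatic, $G\in\mathcal{A}_{T}^{\uparrow\downharpoonleft\downarrow}=\mathcal{A}_{T}^{\uparrow\downarrow}$, because one is taking an infimum of a directed family \emph{inside} the closure class; the inequality $G\leq S$ is recovered afterwards, pointwise, from $G\leq R_{x}$ and $(G-S)^{+}y\leq|R_{x}-S|y=0$ for $y\in\mathcal{F}_{x}$ together with $Gz=0$ on elements laterally disjoint from $x$ (Remark~\ref{rem:31}). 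If you want to keep your truncation idea, you would have to replace $R_{x}\wedge S$ by an infimum taken over operators already known to be in $\mathcal{A}_{T}^{\uparrow\downharpoonleft}$ --- which is in effect the paper's proof.
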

\begin{proof}
Fix $x\in E$ and let
$$
W:=\{R\in\mathcal{A}_{T}^{\uparrow\downharpoonleft}:\,|S-R|x=0\}.
$$
By  Lemma~\ref{le:3} the set $W$ is nonempty, and an easy argument shows that
$W$ is directed downward. Let $G=\inf\{W\}$. Clearly,
$G\in\mathcal{A}_{T}^{\uparrow\downharpoonleft\downarrow}=\mathcal{A}_{T}^{\uparrow\downarrow}$, and
hence $|S-G|x=0$
We claim that $0\leq G\leq S$. By  Remark~\ref{rem:31} $Gz=0$ for every $z\in E$, such that $\mathcal{F}_{z}\cap\mathcal{F}_{x}=0$ and we must  prove $(G-S)^{+}y=0$ for every $y\in\mathcal{F}_{x}$.
Now we may write
$$
(G-S)^{+}y\leq |R_{x}-S|y=|S-R_{x}|y=0,
$$
where $y\in\mathcal{F}_{x}$ and $R_{x}$ is a some element of $W$.
\end{proof}
The following theorem is the first main result of the article.
\begin{thm}\label{frag-2}
Let $E,F$ be vector lattices, $F$ Dedekind complete, $T\in\mathcal{U}_{+}(E,F)$ and $S\in\mathcal{F}_{T}$.
Then $S\in\mathcal{A}_{T}^{\uparrow\downarrow\uparrow}$.
\end{thm}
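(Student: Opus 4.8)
The plan is to deduce the theorem from Lemma~\ref{le:4} by assembling the pointwise approximants $G_x$ into a single increasing net in $\mathcal{A}_T^{\uparrow\downarrow}$ whose supremum is $S$. For this the first thing to record is that $\mathcal{A}_T^{\uparrow\downarrow}$ is closed under finite suprema. Since $\mathcal{A}_T$ is a Boolean subalgebra of $\mathcal{F}_T$, and on $\mathcal{F}_T$ the Boolean operations agree with the lattice operations of the Dedekind complete lattice $\mathcal{U}_{+}(E,F)$, the set $\mathcal{A}_T$ is itself closed under $\vee$. I would propagate this through the decorations using the joint order continuity of $\vee$: if $a_\alpha\downarrow u$ and $b_\beta\downarrow v$ then $0\le a_\alpha\vee b_\beta-u\vee v\le(a_\alpha-u)+(b_\beta-v)\downarrow 0$, so $a_\alpha\vee b_\beta\downarrow u\vee v$ (and dually for increasing nets). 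Hence $\mathcal{A}_T^{\uparrow}$ is closed under $\vee$, and therefore so is $\mathcal{A}_T^{\uparrow\downarrow}$.

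Next I would set $\mathcal{G}=\{G\in\mathcal{A}_T^{\uparrow\downarrow}:\,0\le G\le S\}$. By Lemma~\ref{le:4} every $x\in E$ yields some $G_x\in\mathcal{G}$ with $G_x x=Sx$, so $\mathcal{G}$ is nonempty, and by the closure just noted $\mathcal{G}$ is directed upward (for $G_1,G_2\in\mathcal{G}$ one has $G_1\vee G_2\in\mathcal{A}_T^{\uparrow\downarrow}$ and $0\le G_1\vee G_2\le S$). Since $\mathcal{U}(E,F)$ is Dedekind complete and $\mathcal{G}$ is bounded above by $S$, the supremum $H:=\sup\mathcal{G}$ exists, and the net $(G)_{G\in\mathcal{G}}$ increases to $H$; thus $H\in(\mathcal{A}_T^{\uparrow\downarrow})^{\uparrow}=\mathcal{A}_T^{\uparrow\downarrow\uparrow}$ directly from the definition of the $\uparrow$-closure.

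It then remains to show $H=S$, and here I would use that the lattice supremum of an upward directed, order bounded family of positive abstract Uryson operators is computed pointwise. The map $y\mapsto\sup_{G\in\mathcal{G}}G(y)$ is well defined by Dedekind completeness; it is orthogonally additive because $\sup_G(Gy+Gz)=\sup_G Gy+\sup_G Gz$ for the increasing nets $(Gy)_G$ and $(Gz)_G$; it lies between $0$ and $S$; and it is evidently the least upper bound of $\mathcal{G}$, so $H(y)=\sup_{G\in\mathcal{G}}G(y)$ for all $y$. Consequently $H\le S$ gives $H(y)\le Sy$, while $G_y\le H$ gives $Sy=G_y y\le H(y)$; hence $H(y)=Sy$ for every $y\in E$, that is $H=S$, and $S\in\mathcal{A}_T^{\uparrow\downarrow\uparrow}$.

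I expect the pointwise computation of the supremum to be the main obstacle: Theorem~\ref{th-1}(1) expresses $S\vee T$ through $\sqcup$-decompositions and does not immediately deliver pointwise evaluation, so the substantive point is that for upward directed bounded families the lattice supremum coincides with the pointwise one. The closure bookkeeping of the first step, though needed to secure directedness of $\mathcal{G}$, is routine by comparison.
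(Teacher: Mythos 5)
Your proposal is correct and follows essentially the same route as the paper: you form the set $W=\{R\in\mathcal{A}_{T}^{\uparrow\downarrow}:0\leq R\leq S\}$, note it is nonempty and upward directed, take its supremum $H\in\mathcal{A}_{T}^{\uparrow\downarrow\uparrow}$, and use Lemma~\ref{le:4} at each point $x$ to sandwich $Sx=G_x x\leq Hx\leq Sx$. The only difference is presentational: you supply the directedness and $\uparrow$-closure bookkeeping that the paper leaves implicit, and the pointwise-supremum computation you flag as the main obstacle is actually dispensable, since the sandwich $G_x\leq H\leq S$ evaluated at $x$ already yields $Hx=Sx$ without identifying $H(y)$ with $\sup_{G}G(y)$.
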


\begin{proof}
Let $S\in\mathcal{F}_{T}$ be fixed, and let
$$
W=\{R\in\mathcal{A}_{T}^{\uparrow\downarrow}:\,0\leq R\leq S\}
$$
Clearly, $W$ is a directed set, and by Lemma~\ref{le:4} we know that $W\neq\emptyset$. Let $G=\sup\{W\}$,
and remark that $0\leq G\leq S$. On the
other hand, if $x\in E$ is an  arbitrary element of $E$, by  Lemma~\ref{le:4} there exists some $R\in W$, such that
$0\leq R\leq G\leq S$ and $Rx=Sx$. Thus $G=S$, $S\in\mathcal{A}_{T}^{\uparrow\downarrow\uparrow}$ and $\mathcal{F}_{T}=\mathcal{A}_{T}^{\uparrow\downarrow\uparrow}$.
\end{proof}
Remark that for linear positive operators the same theorem and its  modifications were proved by  de Pagter, Aliprantis and Burkinshaw, Kusraev and Strizhevski   in \cite{Al-1,KS,Pag}.

\section{Domination problem for abstract Uryson narrow operators}
\label{sec5}

In this section we consider a domination problem for narrow abstract Uryson operators. In the classical sense, the domination problem can be stated as follows. Let $E$, $F$ be vector lattices, $S,T: E \to F$ linear operators with $0 \leq S \leq T$. Let $\mathcal P$ be some property of linear operators $R: E \to F$, so that $\mathcal P(R)$ means that $R$ possesses $\mathcal P$. Does $\mathcal P(T)$ imply $\mathcal P(S)$?

Let $E$ be a vector lattice and $x\in E_{+}$. The order ideal generated by $x$ we denote by $E_{x}$. The following theorem is an important tool for further considerations.
\begin{thm}(Freudenthal Spectral Theorem)(\cite{Al}, Theorem~2.8).\label{Fr}
Let $E$ be a vector lattice with the principal projection property
and let $x\in E_{+}$. Then for every $y\in E_{x}$
there exists a sequence $(u_{n})$ of $x$-step functions satisfying
$0\leq y-u_{n}\leq\frac{1}{n}x$ for each $n$ and $u_{n}\uparrow y$.
\end{thm}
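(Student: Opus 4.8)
The plan is to prove this classical result by constructing the Freudenthal spectral family of band projections determined by $y$ relative to $x$, and then approximating $y$ from below by $x$-step functions read off from that family. First I would reduce, by scaling and by splitting $y=y^{+}-y^{-}$, to the case $0\le y\le x$: choosing $\lambda>0$ with $|y|\le\lambda x$ and replacing $y$ by $y/\lambda$ rescales the error bound harmlessly, while a genuinely signed $y$ is handled by running the same construction with the parameter ranging over a symmetric interval $[-\lambda,\lambda]$. So assume $0\le y\le x$.

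For each $t\in[0,1]$ the principal projection property guarantees that the band $\{(y-tx)^{+}\}^{\bot\bot}$ is a projection band; let $P_{t}$ denote the corresponding band projection and put $e_{t}=P_{t}x\sqsubseteq x$. Two facts drive the argument. \emph{Monotonicity:} if $s\le t$ then $(y-tx)^{+}\le(y-sx)^{+}$, whence $P_{t}\le P_{s}$ and $e_{t}\sqsubseteq e_{s}$; moreover $P_{0}$ is the projection onto the band of $y$, and $P_{1}=0$ since $(y-x)^{+}=0$. \emph{Sign identities:} because $(y-tx)^{+}$ lies in the band while $(y-tx)^{-}$ lies in its disjoint complement, one has $P_{t}(y-tx)=(y-tx)^{+}\ge 0$ and $(I-P_{t})(y-tx)=-(y-tx)^{-}\le 0$.

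Now fix $n$, take the partition $t_{k}=k/n$ for $k=0,\dots,n$, and set $Q_{k}=P_{t_{k-1}}-P_{t_{k}}$ and $D_{k}=Q_{k}x$. The $D_{k}$ are mutually disjoint fragments of $x$ with $\sum_{k=1}^{n}D_{k}=P_{0}x\le x$, and since $y\ge 0$ one has $(I-P_{0})y=0$ and $P_{1}y=0$, so $y=\sum_{k=1}^{n}Q_{k}y$. Feeding $Q_{k}\le P_{t_{k-1}}$ and $Q_{k}\le I-P_{t_{k}}$ into the two sign identities yields the layerwise bound
\[
\frac{k-1}{n}\,D_{k}\le Q_{k}y\le\frac{k}{n}\,D_{k}.
\]
Hence the $x$-step function $u_{n}=\sum_{k=1}^{n}\frac{k-1}{n}\,D_{k}$ satisfies $0\le y-u_{n}=\sum_{k=1}^{n}\bigl(Q_{k}y-\frac{k-1}{n}D_{k}\bigr)\le\frac{1}{n}\sum_{k=1}^{n}D_{k}\le\frac{1}{n}x$.

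It remains to arrange that the approximants increase to $y$. I would pass to the running suprema $v_{n}=\sup\{u_{1},\dots,u_{n}\}$, which are again $x$-step functions (the $x$-step functions form a sublattice of $E_{x}$, since any two can be refined over a common finite family of disjoint fragments of $x$); these satisfy $v_{n}\le y$, $v_{n}\uparrow$, and $0\le y-v_{n}\le y-u_{n}\le\frac{1}{n}x$, so $v_{n}\uparrow y$ and $(v_{n})$ is the required sequence. I expect the main obstacle to be the layerwise bound, i.e. checking the sign identities for $P_{t}(y-tx)$ and their compatibility with the comparisons $Q_{k}\le P_{t_{k-1}}$ and $Q_{k}\le I-P_{t_{k}}$; this is exactly the point where the principal projection property is indispensable, as it is what turns each $\{(y-tx)^{+}\}^{\bot\bot}$ into a projection band and makes the projections $P_{t}$ available in the first place.
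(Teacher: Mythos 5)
The paper does not actually prove this statement --- it is imported verbatim as a classical tool from Aliprantis--Burkinshaw (\cite{Al}, Theorem~2.8), so there is no in-paper argument to compare against. Your proof is correct and is essentially the standard spectral-family argument given in that reference: the band projections $P_{t}$ onto $\{(y-tx)^{+}\}^{\bot\bot}$ (available precisely by the principal projection property), the sign identities $P_{t}(y-tx)=(y-tx)^{+}$ and $(I-P_{t})(y-tx)=-(y-tx)^{-}$, the layerwise estimate $\frac{k-1}{n}D_{k}\leq Q_{k}y\leq\frac{k}{n}D_{k}$, and the passage to running suprema all match the textbook proof. The only points you gloss over are routine but worth recording: the rescaling for a general $y\in E_{x}$ turns the bound $\frac{1}{n}x$ into $\frac{\lambda}{n}x$ and must be repaired by reindexing the sequence, and in the signed case one should choose $\lambda$ large enough (e.g.\ $|y|\leq\lambda x$ with $y+\lambda x\geq \frac{\lambda}{2}x$, say) so that the band generated by $(y+\lambda x)^{+}$ contains $y$ and the identity $\sum_{k}Q_{k}y=y$ survives.
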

The next theorem is  the second main result of the article.
\begin{thm} \label{thm:dom}
Let $E,F$ be vector lattices,  $E$ atomless and with the principal projection property,  $F$ be  Dedekind complete, and  $T\in\mathcal{U}_{+}(E,F)$ be an order narrow operator. Then every   operator $S\in\mathcal{U}_{+}(E,F)$, such that $0\leq S\leq T$ is order narrow as well.
\end{thm}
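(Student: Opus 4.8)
The plan is to combine the structure theorem for fragments (Theorem~\ref{frag-2}) with the Freudenthal spectral theorem (Theorem~\ref{Fr}), working throughout with the partition-of-unity description of order convergence (Lemma~\ref{op-100}). First I observe that, since $F$ is Dedekind complete, $\mathcal{U}(E,F)$ is a Dedekind complete vector lattice (Theorem~\ref{th-1}) and therefore has the principal projection property; the hypothesis $0\leq S\leq T$ means exactly that $S$ lies in the order ideal of $\mathcal{U}(E,F)$ generated by $T$. Applying Theorem~\ref{Fr} in $\mathcal{U}(E,F)$ with the role of $x$ played by $T$ and of $y$ by $S$, I obtain a sequence of $T$-step functions $S_{n}=\sum_{i=1}^{k_{n}}\lambda_{i}^{n}T_{i}^{n}$, with $T_{i}^{n}\in\mathcal{F}_{T}$ mutually disjoint and $0\leq\lambda_{i}^{n}\leq1$, such that $S_{n}\uparrow S$ and $0\leq S-S_{n}\leq\frac{1}{n}T$. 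The task then splits into showing that every $S_{n}$ is order narrow and that order narrowness is inherited by the dominated limit $S$.

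Because for any decomposition $e=f\sqcup g$ one has $|Sf-Sg|\leq Sf+Sg=Se\leq Te$, all the nets of differences occurring in Definition~\ref{def:nar1} are order bounded, so by Lemma~\ref{op-100} it is equivalent, and more convenient, to require that an operator $R$ with $0\le R\le T$ be order narrow precisely when, for every $e\in E$, every weak order unit $u$ and every $\varepsilon>0$, there are a partition of unity $(\rho_{\xi})_{\xi\in\Xi}$ in $\mathfrak{B}(F)$ and decompositions $e=f_{\xi}\sqcup g_{\xi}$ with $\rho_{\xi}|Rf_{\xi}-Rg_{\xi}|\leq\varepsilon u$. In this language I would first check that elementary fragments $\rho\pi^{x}T$ are order narrow: since $\pi^{x}T$ coincides with $T$ on the fragments of $x$ and is insensitive to the parts of $e$ disjoint from $x$, balancing $T$ on the common fragments (here atomlessness of $E$ is used) and applying the order continuous projection $\rho$ produces the required partitions. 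I would then record the two elementary closure properties in the same terms, namely that a positive scalar multiple of an order narrow operator is order narrow, and that a finite sum of order narrow operators is order narrow, the latter by passing to a common refinement of finitely many partitions of unity. Together with Theorem~\ref{frag-2}, which exhibits each $T_{i}^{n}\in\mathcal{F}_{T}$ as a member of $\mathcal{A}_{T}^{\uparrow\downarrow\uparrow}$, and the limit lemma below, this yields that every $T_{i}^{n}$, and hence every $S_{n}$, is order narrow.

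The passage to limits is governed by a single estimate: if $0\leq R'\leq R$ in $\mathcal{U}_{+}(E,F)$, then for every decomposition $e=f\sqcup g$ orthogonal additivity gives $|(Rf-Rg)-(R'f-R'g)|\leq(R-R')f+(R-R')g=(R-R')(e)$, uniformly in the decomposition. Hence, given a net $R'_{\alpha}$ of order narrow operators with $R'_{\alpha}\uparrow R$ (or $R'_{\alpha}\downarrow R$) and with $(R-R'_{\alpha})(e)\to0$ in order for each $e$, I can take a partition of unity witnessing the narrowness of a suitable $R'_{\alpha}$, refine it by a partition furnished by Lemma~\ref{op-100} that controls the residual $(R-R'_{\alpha})(e)$, and thereby exhibit $R$ as order narrow. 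Applied to the monotone steps building $\mathcal{A}_{T}^{\uparrow\downarrow\uparrow}$ this proves that fragments of $T$ are order narrow; applied to the Freudenthal sequence, where the residual is the explicit $(S-S_{n})(e)\leq\frac{1}{n}T(e)\to0$, it proves that $S$ is order narrow, completing the argument.

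I expect the genuine difficulty to lie in these order-limit steps rather than in the algebra. Order convergence is not preserved under naive diagonal or iterated limits, so the witnessing nets of the approximating operators cannot simply be concatenated; the whole argument is kept honest only by replacing order convergence of order bounded nets with the partition-of-unity criterion of Lemma~\ref{op-100} and by systematically refining partitions of unity, exactly as in the proof of Lemma~\ref{op-10}. Two further points must be dealt with at the outset: the operator $\pi^{x}T$ is merely dominated by $T$ rather than equal to it, so the base case of narrow elementary fragments needs its own careful argument, and $F$ need not carry a weak order unit, which is handled by the embedding noted in the remark following Lemma~\ref{op-100}.
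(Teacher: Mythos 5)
Your overall architecture coincides with the paper's: you prove that the elementary fragments $\rho\pi^{x}T$ are order narrow, that finite (disjoint) sums and dominated monotone limits of order narrow operators are order narrow via the estimate $|(Rf-Rg)-(R'f-R'g)|\leq (R-R')e$ (which is exactly the paper's three-term triangle inequality with $G_{\xi}e$ in compressed form), then invoke Theorem~\ref{frag-2} to conclude that every fragment of $T$ is order narrow, and finally use the Freudenthal theorem~\ref{Fr} in $\mathcal{U}(E,F)$ to approximate $S$ by step operators built from disjoint fragments of $T$ with residual $\leq\frac{1}{n}T$. All of these steps appear, in the same order and with the same estimates, in the paper's proof.

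The one place where you genuinely depart from the paper is your opening claim that, for $0\leq R\leq T$, order narrowness in the sense of Definition~\ref{def:nar1} is \emph{equivalent} to the condition that for every $e$, $\varepsilon>0$ and weak unit $u$ there exist a partition of unity $(\rho_{\xi})$ in $\mathfrak{B}(F)$ and decompositions $e=f_{\xi}\sqcup g_{\xi}$ with $\rho_{\xi}|Rf_{\xi}-Rg_{\xi}|\leq\varepsilon u$, and your subsequent decision to work exclusively with the latter condition. Lemma~\ref{op-100} gives you only one direction: a given order bounded net that order converges to $0$ admits such partitions. The converse direction, which you need at the very end to convert the partition condition satisfied by $S$ back into an actual net of decompositions $e=f_{\alpha}\sqcup g_{\alpha}$ with $|Sf_{\alpha}-Sg_{\alpha}|\stackrel{\rm(o)}\longrightarrow 0$, is not justified: the good decomposition of $e$ varies with the band $\rho_{\xi}F$, the partitions act on $F$ while the decompositions live in $E$, so there is no way to reassemble the family $(f_{\xi},g_{\xi})$ into a single decomposition, and off the band $\rho_{\xi}F$ the difference $|Rf_{\xi}-Rg_{\xi}|$ is controlled only by $Re$. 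What the partition condition readily yields is $\inf\{|Rf-Rg|:e=f\sqcup g\}=0$, which is weaker than the net convergence required by Definition~\ref{def:nar1} because that set need not be downward directed. The paper avoids this entirely by staying in the net formulation throughout Section~\ref{sec5} (partitions of unity are used only inside Lemma~\ref{op-10}, and only in the direction that Lemma~\ref{op-100} actually provides). Your diagnosis that iterated order limits are the delicate point is fair, but the cure you propose replaces that difficulty with an unproved equivalence; to close the gap you should either supply a proof of the converse implication or run your steps directly with nets of decompositions and dominating nets $\eta_{\alpha}\downarrow 0$, as the paper does.
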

For the proof we need an some auxiliary result.
Let $E,F$ be vector lattices, a family of operators $\{T_{1},\dots, T_{n}\}\subset\mathcal{U}(E,F)$ is said to have
\textit{pairwise disjoint supports} if there exists a family of pairwise disjoint bands $E_{1},\dots,E_{n}\subset E$, such that $T_{i}x=0$ for every $x\in E_{i}^{\bot}$, $i\in\{1,\dots,n\}$.

\begin{lemma} \label{l:sum}
Let $E,F$ be vector lattices,  $E$ atomless and with the projection property,  $F$ be  Dedekind complete,  and $\{T_{1},\dots, T_{n}\}\subset\mathcal{U}(E,F)$ be a family of order narrow operators with pairwise disjoint  supports. Then  $T=\sum\limits_{i=1}^{n}T_{i}$ is  an order narrow operator as well.
\end{lemma}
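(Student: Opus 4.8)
The plan is to reduce the order narrowness of $T$ to that of the individual $T_i$ by splitting an arbitrary $e\in E$ along the disjoint bands $E_1,\dots,E_n$. First I would use the projection property: since the bands $E_i$ are pairwise disjoint, they are pairwise disjoint projection bands, so the associated order projections $P_i$ are mutually disjoint and $P=\sum_{i=1}^n P_i$ is again an order projection. Putting $e_i=P_ie\in E_i$ and $e_0=(I-P)e$, I obtain a disjoint decomposition $e=e_0\sqcup e_1\sqcup\cdots\sqcup e_n$ in which $e_0$ is disjoint from every $E_i$. The purpose of this splitting is that $T_j$ annihilates everything in $E_j^{\bot}$; in particular $T_j(e_0)=0$, and $T_j(y)=0$ whenever $y\in E_i$ with $j\neq i$. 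Consequently, for any fragment $y\sqsubseteq e_i$ (which again lies in $E_i$, since a band is an order ideal and $|y|\leq|e_i|$) one has $T(y)=\sum_{j}T_j(y)=T_i(y)$, while $T(e_0)=0$.

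Next I would invoke order narrowness of each $T_i$ at the element $e_i$: there is a net of decompositions $e_i=f^i_{\alpha_i}\sqcup g^i_{\alpha_i}$, $\alpha_i\in\Lambda_i$, with $T_i(f^i_{\alpha_i})-T_i(g^i_{\alpha_i})\overset{\rm (o)}{\longrightarrow}0$; for each $i$ I fix a control net $u^i_{\alpha_i}\downarrow 0$ in $F_{+}$ dominating the absolute value. I then pass to the product directed set $\Lambda=\Lambda_1\times\cdots\times\Lambda_n$ and, for $\alpha=(\alpha_1,\dots,\alpha_n)$, assemble
$$f_\alpha=e_0\sqcup f^1_{\alpha_1}\sqcup\cdots\sqcup f^n_{\alpha_n},\qquad g_\alpha=g^1_{\alpha_1}\sqcup\cdots\sqcup g^n_{\alpha_n}.$$
Because the pieces $e_0,f^1_{\alpha_1},g^1_{\alpha_1},\dots,f^n_{\alpha_n},g^n_{\alpha_n}$ are pairwise disjoint (same-index pieces by construction, different-index pieces because the bands are disjoint, and $e_0$ disjoint from all of them), $f_\alpha$ and $g_\alpha$ are disjoint and sum to $e$, so $e=f_\alpha\sqcup g_\alpha$ is a genuine decomposition. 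Applying orthogonal additivity of $T$ together with the identities $T(e_0)=0$ and $T(f^i_{\alpha_i})=T_i(f^i_{\alpha_i})$, $T(g^i_{\alpha_i})=T_i(g^i_{\alpha_i})$ from the previous step, I obtain
$$T(f_\alpha)-T(g_\alpha)=\sum_{i=1}^n\big(T_i(f^i_{\alpha_i})-T_i(g^i_{\alpha_i})\big).$$

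Finally I would show this net order converges to $0$ along $\Lambda$. Taking $u_\alpha=\sum_{i=1}^n u^i_{\alpha_i}$, the estimate $|T(f_\alpha)-T(g_\alpha)|\leq u_\alpha$ is immediate from the triangle inequality, and $(u_\alpha)$ is decreasing in the product order. The one genuine lemma needed is that a finite sum of mutually independent order-null nets is order-null, that is, $\bigwedge_{\alpha\in\Lambda}\sum_i u^i_{\alpha_i}=\sum_i\bigwedge_{\alpha_i}u^i_{\alpha_i}=0$ (Dedekind completeness of $F$ guarantees the infima exist). This reduces by induction to the two-net case: for any lower bound $w\leq a_\beta+b_\gamma$ one fixes $\gamma$, lets $\beta$ vary to get $w\leq b_\gamma$, and then lets $\gamma$ vary to get $w\leq 0$. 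With $u_\alpha\downarrow 0$ established, $T(f_\alpha)-T(g_\alpha)\overset{\rm (o)}{\longrightarrow}0$, so $T$ is order narrow.

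The main obstacle is precisely this passage to the product net: one must amalgamate the $n$ separate narrowness nets into a \emph{single} decomposition net for $e$ and then control the resulting finite sum of order-convergent nets. All the disjointness bookkeeping of the first two paragraphs is exactly what makes the clean identity for $T(f_\alpha)-T(g_\alpha)$ available, after which the convergence is a formal consequence of the independent-infima lemma.
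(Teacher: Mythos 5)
Your proof is correct and follows essentially the same route as the paper: split $e$ along the disjoint projection bands as $e=e_0\sqcup e_1\sqcup\cdots\sqcup e_n$, apply narrowness of each $T_i$ to $e_i$, and reassemble the pieces into a single decomposition of $e$ whose $T$-difference is the sum of the individual $T_i$-differences. The only difference is that you make explicit the passage to the product directed set and the fact that a finite sum of independent order-null nets is order-null, details which the paper's proof suppresses by using a single index $\alpha$ for all $n$ nets.
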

\begin{proof}
Fix an arbitrary element $e\in E$. Let $\rho_{i}$,  be a band projection to the band $E_{i}$, $i\in\{1,\dots,n\}$, and $\zeta=Id-\bigvee\limits_{i=1}^{n}\rho_i$. Then we may write $e=h\sqcup\bigsqcup\limits_{i=1}^{n}e_{i}$, where $e_{i}=\rho_{i}e$, $i\in\{1,\dots,n\}$ and $h=\zeta e$. By our assumption for every $e_{i}$, $i\in\{1,\dots,n\}$ there exists a net of decompositions $e_{i} = e_{i1}^{\alpha} \sqcup e_{i2}^{\alpha}$ such that $(T_{i}(e_{i1}^{\alpha}) - T_{i}(e_{i2}^{\alpha}) )\overset{\rm (o)}\longrightarrow 0$. Let $f_{\alpha}=\bigsqcup\limits_{i=1}^{n}e_{i1}^{\alpha}$ and $g_{\alpha}=\bigsqcup\limits_{i=1}^{n}e_{i2}^{\alpha}$. Now we have
\begin{align*}
|T(h+f_{\alpha})-T(g_{\alpha})|=
\Big|\sum\limits_{i=1}^{n}T_{i}\Big(h\sqcup\bigsqcup\limits_{j=1}^{n}e_{j1}^{\alpha}\Big)-
\sum\limits_{i=1}^{n}T_{i}\Big(\bigsqcup\limits_{j=1}^{n}e_{j2}^{\alpha}\Big)\Big|= \\
\Big|\sum\limits_{i=1}^{n}T_{i}(e_{i1}^{\alpha})-\sum\limits_{i=1}^{n}T_{i}(e_{i2}^{\alpha})\Big|=
\Big|\sum\limits_{i=1}^{n}(T_{i}(e_{i1}^{\alpha})-T_{i}(e_{i2}^{\alpha})\Big|\leq \\
\sum\limits_{i=1}^{n}\Big|T_{i}(e_{i1}^{\alpha})-T_{i}(e_{i2}^{\alpha})\Big|\overset{\rm (o)}\longrightarrow 0.
\end{align*}
Thus $(h\sqcup f_{\alpha}) \sqcup g_{\alpha}=e$ is the desired net of decompositions.
\end{proof}

\begin{lemma}\label{le:12}
Let $E,F$ be the same as in the Theorem~\ref{thm:dom}, $x_{1},x_{2}\in E$ and $x_{1}\bot x_{2}$. Then  $\pi^{x_{1}+ x_{2}}T=\pi^{x_{1}}T+\pi^{x_{2}}T$ for every $T\in\mathcal{U}_{+}(E,F)$.
\end{lemma}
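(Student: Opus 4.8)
The plan is to prove the operator identity pointwise: fix an arbitrary $z\in E$ and show $\pi^{x_1+x_2}T(z)=\pi^{x_1}T(z)+\pi^{x_2}T(z)$, using the defining formula $\pi^{x_i}T(z)=\sup\{Ty:\,y\in\mathcal F_z\cap\mathcal F_{x_i}\}$. The heart of the argument is a structural bijection between the fragments of $x_1+x_2$ lying below $z$ and the pairs of fragments of $x_1,x_2$ lying below $z$; once that is in place, orthogonal additivity and a supremum computation in the Dedekind complete lattice $F$ finish the proof.

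\textbf{Step 1 (splitting fragments of a disjoint sum).} First I would establish that, because $x_1\bot x_2$, every $y\in\mathcal F_{x_1+x_2}$ decomposes uniquely as $y=y_1\sqcup y_2$ with $y_i\in\mathcal F_{x_i}$, and conversely every such disjoint sum belongs to $\mathcal F_{x_1+x_2}$. This is where the principal projection property is used: let $P_i$ be the band projection onto $\{x_i\}^{\bot\bot}$. Since $x_1\bot x_2$ one has $\{x_1+x_2\}^{\bot\bot}=\{x_1\}^{\bot\bot}\oplus\{x_2\}^{\bot\bot}$, and as $y\sqsubseteq x_1+x_2$ forces $|y|\leq|x_1|+|x_2|$, the element $y$ lies in this band, so $y=P_1y+P_2y=:y_1+y_2$ with $y_1\bot y_2$. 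Band projections preserve disjointness, and applying $P_1$ to the relation $y\bot(x_1+x_2-y)$ together with $P_1x_1=x_1$, $P_1x_2=0$ yields $y_1\bot(x_1-y_1)$, i.e. $y_1\sqsubseteq x_1$; symmetrically $y_2\sqsubseteq x_2$. The converse inclusion is the routine disjointness bookkeeping that $y_1,y_2,x_1-y_1,x_2-y_2$ are pairwise disjoint, whence $y_1+y_2\sqsubseteq x_1+x_2$.

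\textbf{Step 2 (compatibility with $z$).} Next I invoke that $\mathcal F_z$ is a lateral ideal (Lemma~\ref{Ex1}). If $y\in\mathcal F_z\cap\mathcal F_{x_1+x_2}$, then $y_1,y_2$ are fragments of $y$, hence of $z$ by condition~(1) of Definition~\ref{def:adm}; conversely, if $y_i\in\mathcal F_z\cap\mathcal F_{x_i}$ then $y_1\bot y_2$ and condition~(2) gives $y_1+y_2\in\mathcal F_z$. Combined with Step~1 this shows that $(y_1,y_2)\mapsto y_1+y_2$ is a bijection from $(\mathcal F_z\cap\mathcal F_{x_1})\times(\mathcal F_z\cap\mathcal F_{x_2})$ onto $\mathcal F_z\cap\mathcal F_{x_1+x_2}$.

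\textbf{Step 3 (assembling the suprema).} On this bijection orthogonal additivity gives $Ty=T(y_1\sqcup y_2)=Ty_1+Ty_2$. Since $\mathcal F_{x_i}$ is a lateral ideal, the operators $\pi^{x_i}T$ are well defined and positive by Lemma~\ref{le:01}, so the suprema $\pi^{x_i}T(z)$ exist in the Dedekind complete lattice $F$. Applying the standard identity $\sup\{a+b\}=\sup a+\sup b$ for two independent order bounded families, I obtain
$$\pi^{x_1+x_2}T(z)=\sup\{Ty_1+Ty_2:\,y_i\in\mathcal F_z\cap\mathcal F_{x_i}\}=\pi^{x_1}T(z)+\pi^{x_2}T(z).$$
As $z\in E$ was arbitrary, this is exactly $\pi^{x_1+x_2}T=\pi^{x_1}T+\pi^{x_2}T$. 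The main obstacle is Step~1: verifying that the band-projected pieces $y_i$ are genuine fragments of $x_i$ and that $y$ actually lies in the direct sum $\{x_1\}^{\bot\bot}\oplus\{x_2\}^{\bot\bot}$; everything downstream is then bookkeeping with the lateral-ideal axioms and the supremum computation.
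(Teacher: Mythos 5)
Your proposal is correct and follows essentially the same route as the paper: both arguments reduce the identity to the pointwise statement that fragments of $x_1+x_2$ below $z$ split as disjoint sums of fragments of $x_1$ and of $x_2$ below $z$, then use orthogonal additivity and compare the suprema. The only difference is one of detail — the paper simply asserts the splitting $y=y_1\sqcup y_2$ and argues via two one-sided inequalities, whereas you justify the splitting explicitly with band projections and phrase the conclusion as a sup-of-independent-sums identity; these are equivalent.
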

\begin{proof}
Fix an arbitrary element $x\in E$. Then for every $y\in\mathcal{F}_{x}$ so that  $y\sqsubseteq (x_{1}+x_{2})$,  we have $y=y_{1}\sqcup y_{2}$, $y_{i}\sqsubseteq x_{i}$, $i\in\{1,2\}$ and the following inequalities hold
\begin{align*}
Ty=Ty_{1}+Ty_{2}\leq\pi^{x_{1}}Tx+\pi^{x_{2}}Tx;\\
\pi^{x_{1}+x_{2}}Tx\leq\pi^{x_{1}}Tx+\pi^{x_{2}}Tx.
\end{align*}
On the other hand for every $y_{i}\sqsubseteq x_{i}$, $y_{i}\sqsubseteq x$, $i\in\{1,2\}$ we may write
\begin{align*}
Ty_{1}+Ty_{2}=T(y_{1}+y_{2})\leq\pi^{x_{1}+x_{2}}Tx;\\
\pi^{x_{1}}Tx+\pi^{x_{2}}Tx\leq\pi^{x_{1}+x_{2}}Tx.
\end{align*}
\end{proof}
\begin{proof}[Proof of Theorem~\ref{thm:dom}]
Let $T\in\mathcal{U}_{+}(E,F)$ be an order narrow operator, and $x\in E$. Firstly we prove that operator $\rho\pi^{x}T$ is also order narrow. Fix an arbitrary element $e\in E$. By our assumption there exists a net of decompositions $e = f_{\alpha}\sqcup g_{\alpha}$ such that $|T(f_{\alpha}) - T(g_{\alpha})|\leq\eta_{\alpha}$, $(\eta_{\alpha})\subset F_{+}$ and $(\eta_{\alpha})\downarrow 0$. Remark that $D=\{y\sqsubseteq e:\,y\in\mathcal{F}_{x}\}$ is a directed set and by definition of the operator $\pi^{x}T$  there exists a net $(y_{\alpha})\subset D$ so that
$$
|\pi^{x}Te-Ty_{\alpha}|=|\pi^{x}Te-\pi^{x}Ty_{\alpha}|=|\pi^{x}T(e-y_{\alpha})|\leq\xi_{\alpha}
$$
for some decreasing net $(\xi_{\alpha})\subset F_{+}$, $\inf\limits_{\alpha}\xi_{\alpha}=0$. By our assumption there exists a net of decompositions $y_{\alpha} = f_{\alpha} \sqcup g_{\alpha}$ such that $|T(f_{\alpha}) - T(g_{\alpha})|\leq\eta_{\alpha}$, $(\eta_{\alpha})\subset F_{+}$ and $(\eta_{\alpha})\downarrow 0$. Then we may write
\begin{align*}
|\pi^{x}T((e-y_{\alpha})\sqcup f_{\alpha}))-\pi^{x}T(g_{\alpha})|=\\
|\pi^{x}T(e-y_{\alpha})+\pi^{x}Tf_{\alpha}-\pi^{x}Tg_{\alpha}|=\\
|\pi^{x}T(e-y_{\alpha})+Tf_{\alpha}-Tg_{\alpha}|\leq\\
|\pi^{x}T(e-y_{\alpha})|+|Tf_{\alpha}-Tg_{\alpha}|\leq\xi_{\alpha}+\eta_{\alpha}\overset{\rm (o)}\longrightarrow 0.
\end{align*}
So $((e-y_{\alpha})\sqcup f_{\alpha}))\sqcup g_{\alpha}=e$ is a desired net of decompositions. It is clear that operator $\rho\pi^{x}T$ is order narrow as well.
Secondly, take the operator $R=\sum\limits_{i=1}^{n}\rho_{i}\pi^{x_{i}}T$, where $x_{1},\dots, x_{n}$ are fragments of a some element $x\in E$ and $\rho_{1},\dots,\rho_{n}$ are mutually disjoint. By  Lemma~\ref{le:12}, we may assume that all fragments $x_{1},\dots, x_{n}$ are mutually disjoint. Then applying  Lemma~\ref{l:sum} we prove that $R$ is an order narrow operator. Now, let  $(R_{\xi})_{\xi\in\Xi}\subset\mathcal{U}_{+}(E,F)$ be an increasing (decreasing) net of order narrow operators and $S=\sup\limits_{\xi}R_{\xi}$ ($S=\inf\limits_{\xi}R_{\xi}$). This meant that there exists a decreasing net $(G_{\xi})_{\xi\in\Xi}\subset\mathcal{U}_{+}(E,F)$, so that
$\inf\limits_{\xi}G_{\xi}=0$ and
\begin{align*}
|Se-R_{\xi}e|=|(S-R_{\xi})e|\leq|S-R_{\xi}|e\leq G_{\xi}e
\end{align*}
for every $e\in E$. Let us  show that $S$ is also order narrow. Indeed, fix an arbitrary element $e\in E$ and write
\begin{align*}
|Sf_{\alpha}-Sg_{\alpha}|=|Sf_{\alpha}-R_{\xi}f_{\alpha}+R_{\xi}f_{\alpha}-
R_{\xi}g_{\alpha}+R_{\xi}g_{\alpha}-Sg_{\alpha}|\leq \\
|Sf_{\alpha}-R_{\xi}f_{\alpha}|+|R_{\xi}f_{\alpha}-
R_{\xi}g_{\alpha}|+|Sg_{\alpha}-R_{\xi}g_{\alpha}|\leq \\
G_{\xi}f_{\alpha}+|R_{\xi}f_{\alpha}-
R_{\xi}g_{\alpha}|+G_{\xi}g_{\alpha}\leq \\
G_{\xi}e+|R_{\xi}f_{\alpha}-
R_{\xi}g_{\alpha}|+G_{\xi}e\overset{\rm (o)}\longrightarrow 0.
\end{align*}
By Theorem~\ref{frag-2} we have that $\mathcal{F}_{T}=\mathfrak{A}_{T}^{\uparrow\downarrow\uparrow}$ and  applying  this equality we obtain that every fragment of an order narrow operator $T$ is also  order narrow.  Finally take an arbitrary operator $S\in\mathcal{U}(E,F)$, so that $0\leq S\leq kT$, $k\in\Bbb{R}_{+}$. By
Theorem~\ref{Fr} there exists a sequence $R_{n}$ of $T$-step positive abstract Uryson operators $R_{n}=\sum\limits_{i=1}^{k_{n}}\lambda_{i}C_{i}$, where $\lambda_{i}>0$ for $i\in\{1,\dots, n\}$ and the operators $C_{1},\dots, C_{k_{n}}$ are pairwise disjoint fragments of $T$ such that   so that $|S(e)-R_{n}(e)|\leq\frac{1}{n}T(e)$ for every $e\in E$. Dividing by $\max\{\lambda_{i}:i=1,\dots,k_{n}\}$ we may assume that $\lambda_{i}\leq 1$ for every $i\in\{1,\dots,k_{n}\}$ and therefore $0\leq R_{n}=\sum\limits_{i=1}^{k_{n}}\lambda_{i}C_{i}=\bigvee\limits_{i=1}^{k_{n}}\lambda_{i}C_{i}\leq T$ is a fragment of the operator $T$ for every $n\in\Bbb{N}$. Thus $R_{n}$ is an order narrow operator for every $n\in\Bbb{N}$. Finally, using the same arguments as above, we obtain that $S$ is an order narrow operator.
\end{proof}
Remark that for linear positive operators the similar theorem  was  proved by  Flores and Ruiz    in \cite{FR}.

\end{document}